\tikzset{every picture/.style={line width=0.11mm}}
\newcommand{\oPerpSymbol}{\begin{tikzpicture}[scale=0.134]
  \draw (0,-0.5)--(0,1); \draw (-0.866,-0.5)--(0.866,-0.5);
  \draw (0,0) circle [radius=1];
\end{tikzpicture}}
\newcommand{\oPerp}{\mathbin{\raisebox{-1pt}{\oPerpSymbol}}}
\DeclareMathAlphabet{\pazocal}{OMS}{zplm}{m}{n}
\def\esc#1{\langle #1\rangle}
\def\escd#1{\langle\!\langle #1\rangle\!\rangle}
\def\F{\mathcal F}
\newcommand{\K}{{\mathbb{K}}}
\newcommand{\FC}{{\mathbb{F}}}
\def\G{\mathcal G}
\def\H{\mathcal H}
\def\FF{\mathbf{F}}
\def\R{\mathbb{R}}
\def\Q{\mathbb{Q}}
\def\C{\mathbb{C}}
\def\rad{\mathop{\hbox{\bf rad}}}
\def\rads{\mathop{\hbox{\footnotesize\bf rad}}}
\def\diag{\mathop{\hbox{\rm diag}}}
\def\l{\lambda}
\def\sp{\mathop{\hbox{\rm span}}}
\def\so{simultaneously orthogonalizable}
\def\T{\mathcal T}
\def\D{\mathcal D}
\def\span{\mathop{\hbox{\rm span}}}
\def\L{\mathcal{L}}
\newtheorem{lemma}{Lemma}
\newtheorem{proposition}{Proposition}
\newtheorem{corollary}{Corollary}
\newtheorem{definition}{Definition}
\newtheorem{theorem}{Theorem}
\newtheorem{example}{Example}
\newtheorem{remark}{Remark}
\title[Simultaneous orthogonalization of inner products]{Simultaneous orthogonalization of inner products over arbitrary fields}
\author[Y. Cabrera]{Yolanda Cabrera Casado}
\author[C. Gil]{Crist\'obal Gil Canto}
\author[D. Mart\'{\i}n]{Dolores Mart\'in Barquero}
\author[C. Mart\'{\i}n]{C\'andido Mart\'in Gonz\'alez}
\address{Departamento de Matem\'atica Aplicada, E.T.S. Ingenier\'\i a Inform\'atica, Universidad de M\'alaga, Campus de Teatinos s/n. 29071 M\'alaga.   Spain. }
\email{yolandacc@uma.es}
\address{Departamento de Matem\'atica Aplicada, E.T.S. Ingenier\'\i a Inform\'atica, Universidad de M\'alaga, Campus de Teatinos s/n. 29071 M\'alaga.   Spain.}
\email{cgilc@uma.es}
\address{Departamento de Matem\'atica Aplicada, Escuela de Ingenier\'\i as Industriales, Universidad de M\'alaga, Campus de Teatinos s/n. 29071 M\'alaga.   Spain.}
\email{dmartin@uma.es}
\address{ Departamento de \'Algebra Geometr\'{\i}a y Topolog\'{\i}a, Fa\-cultad de Ciencias, Universidad de M\'alaga, Campus de Teatinos s/n. 29071 M\'alaga.   Spain.}
\email{candido\_m@uma.es}
\thanks{The  authors are supported by the Spanish Ministerio de Ciencia e Innovaci\'on through the project  PID2019-104236GB-I00 and by the Junta de Andaluc\'{\i}a  through the projects  FQM-336 and UMA18-FEDERJA-119,  all of them with FEDER funds.}
\begin{document}

\subjclass[2020] {11E04, 15A63, 17D92, 15A20} 
\keywords{Symmetric bilinear form, inner product, evolution algebra, simultaneous orthogonalization, simultaneous diagonalization via congruence.}

\maketitle

\begin{abstract}
We give necessary and sufficient conditions for a family of inner products in a finite-dimensional vector space $V$ over an arbitrary field $\K$ to have an orthogonal basis relative to all the inner products. Some applications to evolution algebras are also considered.
\end{abstract}



\section{Introduction}

The problem of determining when a finite collection of symmetric matrices is simultaneously diagonalizable via congruence is historically well-known. The first known result is that two real symmetric matrices $A$ and $B$ can be simultaneously diagonalizable via real congruence if one of them, $A$ or $B$, is definite. Another classical attempt to solve the problem when we have a family of two inner products is as follows: if given two symmetric $n \times n$ matrices $A$ and $B$ over the reals which satisfy that there exist $\alpha, \beta \in \R$ such that $\alpha A + \beta B$ is positive definite, then $A$ and $B$ are simultaneously diagonalizable via congruence. Next, the previous statement was proved to be an ``if and only if" for the case $n \geq 3$ by Finsler (1937) \cite{Finsler} and Calaby (1964) \cite{Calabi}. Afterwards Greub \cite{Greub} finds a necessary condition in order to have two real symmetric matrices $A$ and $B$ simultaneously diagonalizable via congruence for $n \geq 3$, specifically: $(xAx^t)^2+(xBx^t)^2\neq0$ if $x\neq0$. Besides Wonenburger (1966) and Becker (1978), among other authors, give necessary and sufficient conditions for a pair of symmetric matrices to be simultaneously diagonalizable via congruence in a more general setting. The study that most resembles ours is Wonenburger's \cite{Wonenburger} who studies first the case of two nondegenerate symmetric bilinear forms (for a field of characteristic $\ne 2$) and in a subsequent result she attacks the problem when the ground field is real closed  and the forms do not vanish simultaneously. In particular our Corollary \ref{rabia} generalises \cite[Theorem 1]{Wonenburger}. However, Becker \cite{Becker} approaches his work about simultaneously diagonalization via congruence of two hermitian matrices with $\C$ as a base field. Going a bit further, Uhlig \cite{Uhligart} works the simultaneously block diagonalization via congruence of two real symmetric matrices. Part of the literature concerning the simultaneously diagonalization via congruence of only pairs of quadratic forms are included in the survey \cite{Uhlig} and for more general information about this topic see Problem $12$ in \cite{HU}.

In the fourth's author talk \lq\lq Two constructions related to evolution algebras\rq\rq\  given in 2018's event \lq\lq Research School on Evolution Algebras and non associative algebraic structures\rq\rq\ \cite{school} at University of M\'alaga (Spain), the idea of the simultaneous orthogonalization of inners products in the context of evolution algebras was exposed. The question posed there was: how to classify up to simultaneous congruence couples of simultaneous orthogonalizable inner products.

Recently the work \cite{BMV} deals also with the problem of simultaneously orthogonalization of inner products on finite-dimensional vector spaces over $\K=\R$ or $\C$. One of the motivations in \cite{BMV} is that of detecting when a given algebra
is an evolution algebra. The issue is equivalent to the simultaneous orthogonalization of a collection of inner products (the projections of the product in
the lines generated by each element of a given basis of the algebra). 
Modulo some minimal mistake in \cite[Theorem 2]{BMV} and \cite[Corollary 1]{BMV} (where the \lq\lq if and only if\rq\rq\ is actually a one-direction implication), the paper provides answers to the question working over $\C$. However, we will see in our work that it is possible
to handle satisfactorily the task considering general ground fields. 
Though we solve the problem for arbitrary fields, the case of characteristic two, has to be considered on its own in some of our results.
 
The paper is organized as follows. In Section \ref{tarde} we introduce the preliminary definitions and notations. If we consider an algebra of dimension $n$,
we introduce in subsection \ref{edrat} the evolution test ideal.
This is an ideal $J$ in certain polynomial algebra on $n^2+1$ indeterminates 
whose zero set $V(J)$ tell us if $A$ is or not an evolution algebra. 
In section \ref{S:1} we consider the nondegenerate case: we have a family of
inner products $\F$ such that at least one of the inner products is nondegenerate.
Theorem \ref{nomam} is the main result in this section, the example below would be helpful to understand how it works. Our result includes the characteristic $2$ case and we have inserted Example \ref{3olpmeje} to illustrate how to deal with this eventuality. In section \ref{Miller} we deal with
the general case (that in which no inner product in the family is nondegenerate). We introduce a notion of radical of a family of inner products inspired by the general
procedure of modding out those elements which obstruct the symmetry
of a phenomenon. After examining the historical development of the topic of the simultaneously diagonalization via congruence, we realized that the idea appears, of course, in other cited works
dealing with the problem (see for example \cite{Becker} and \cite{Wonenburger}). In this way we reduce the problem to families whose radical is zero (though possibly degenerate). We define also a notion of equivalence of families of inner products (roughly speaking two such families are equivalent when the orthogonalizability of the space relative to one of them is equivalent to the orthogonalizability relative to the second family, and moreover  each orthogonal decomposition relative to one of families is also an orthogonal decomposition relative to the other).
When the ground field is infinite we prove that any $\F$ with zero radical we can modified to an equivalent nondegenerate family $\F'$ by adding only one inner product. Finally Theorem \ref{erdamus} and Proposition \ref{tsal} can be considered
as structure theorem of families $\F$ of inner products with zero radical. In order to better understand how to apply the thesis given in the last results we refer to Example \ref{otxes}, where we work with an algebra of dimension $6$ over the rationals.

\section{Preliminaries and basic results}\label{tarde}
Let $V$ be a vector space over a field $\K$, a family $\F=\{\esc{\cdot,\cdot}_i\}_{i\in I}$ of inner products (symmetric bilinear forms) on $V$ is said to be \emph{simultaneously orthogonalizable} if there exists a basis $\{v_j\}_{j\in\Lambda}$ of $V$ such that for any $i\in I$ we have   
$\esc{v_j,v_k}_i=0$ whenever $j\ne k$.  We say  that the family $\F$ is \emph{nondegenerate} if there exists $i \in I$ such that $\esc{\cdot,\cdot}_i$ is nondegenerate. Otherwise, we say that the family $\F$ is \emph{degenerate}. Let $V$ be a vector space over a field $\K$ with a inner product $\esc{\cdot,\!\cdot}\colon V\times V \to \K$ and $T \colon V \to V$ a linear map.  We will say that $T^\sharp\colon V\to V$ is the \emph{adjoint} of $T\colon V\to V$ when $\esc{T(x),y}=\esc{x,T^\sharp(y)}$ for any $x,y\in V$. A linear map $T\colon V\to V$ is said to be \emph{self-adjoint} when its adjoint exists and $T^\sharp=T$. 
The existence of the adjoint is guaranteed in the case of a nondegenerate inner product on a finite-dimensional vector space. 

If $V$ is a finite-dimensional $\K$-vector space and $\esc{\cdot,\!\cdot}$ an inner product in $V$, it is well-known that if the characteristic of $\K$ ($\hbox{char}(\K)$) is other than $2$, there is an orthogonal basis of $V$ relative to $\esc{\cdot,\!\cdot}$. In the characteristic two case there are inner products with no orthogonal basis, for instance, the inner product of matrix $\tiny\begin{pmatrix}0 & 1\cr 1 & 0\end{pmatrix}$ in the vector space $\FF_2^2$ over the field of two elements $\FF_2$. But even over fields of characteristic other than two, there are sets of inner products which are not simultaneously orthogonalizable. Such is the case of the inner products of the real vector space $\R^2$ which in the canonical basis have the matrices: $$\left(
\begin{array}{cc}
 1 & 0 \\
 0 & 1 \\
\end{array} 
\right),\quad \left(
\begin{array}{cc}
 0 & 1 \\
 1 & 0 \\
\end{array}
\right), \quad \left(
\begin{array}{rr}
 1 & 0 \\
 0 & -1 \\
\end{array}
\right).$$

\begin{remark}\label{atupoi} \rm
{Concerning the above example, if $\F$ is a family of inner products on a finite-dimensional vector space $V$ of dimension $n$ and $\F$ is \so, then the maximum number
of linearly independent elements of $\F$ is $n$. Thus if we have a family of more than $n$ linearly independent inner products on an $n$-dimensional vector space, we can conclude that $\F$ is not \so. This is the reason why the above three inner products are not \so.
}
\end{remark}

If $\F=\{\esc{\cdot \ , \cdot }_i\}_{i\in I}$ is a family of inner products on a space $V$ over a field $\K$, it may happen that $\F$ is not simultaneously orthogonalizable but that under scalar extension to a field $\FC\supset \K$, the family $\F$ considered as a family of inner products in the scalar extension $V_\FC$ is simultaneously orthogonalizable as we show in the following example. 
\begin{example}\label{ejem1} \rm \label{gm}
Consider the inner products of the real vector space $\R^2$ whose Gram matrices relative to the canonical basis are

$$
\left(
\begin{array}{rr}-1 & 1\cr 1 & 1\end{array}\right), \left(\begin{array}{rr}1 & 1\cr 1 & -1\end{array}\right).
$$
It is easy to check that they are not simultaneously orthogonalizable,  however the inner products of the complex vector space $\C^2$ with the same matrices are simultaneously orthogonalizable relative to the basis $B=\{(i,1),(-i,1)\}$. 
\end{example}

One of the applications of \so\  theory is to evolution algebras. For this reason, we briefly remember some basic notions related to these algebras. A $\K$-algebra $A$ is an \emph{evolution algebra} if there exists a basis $B=\{e_i\}_{i\in I}$ called \emph{natural basis} such that $e_ie_j=0$ for any $i\neq j$. Fixed a natural basis $B$ of $A$, the matrix $C=(c_{ij})$ with $c_{ij} \in \K$ such that $e_i^2=\sum_jc_{ji}e_j$ will be called the \emph{structure matrix} of $A$ relative to $B$.

The problem of simultaneously orthogonalization of families of inner products in a vector space is directly related to that of detecting whether or not a given algebra is an evolution algebra. If $A$ is a commutative algebra over a field $\K$ the product in $A$ can be written in the form 
\begin{equation}\label{mrof}
x y=\sum_{i \in I}\esc{x,y}_i e_i
\end{equation}
where $\{e_i\}_{i\in I}$ is any fixed basis of $A$ and the inner products $\esc{\cdot,\cdot}_i\colon A\times A\to \K$ provide the coordinates of $xy$ relative to the basis. So $A$ is an evolution algebra if and only if the set of inner products $\esc{\cdot,\cdot}_i$ is simultaneously orthogonalizable. 
The above result appears in \cite[Theorem 1]{BMV} under the additional conditions that the dimension of the algebra is finite and the ground field is $\R$ or $\C$.
Note that the ``if and only if" condition of \cite[Theorem 2]{BMV} is not true (see Example \ref{gm}). The implication that is trivially true is that 
$\F=\{\esc{\cdot \ , \cdot }_i\}_{i\in I}$ being simultaneously orthogonalizable in the vector space $V$ over $\K$, implies that the same family considered in the scalar extension $V_\FC$ (with $\FC \supset \K$) is also simultaneously orthogonalizable. Note also that \cite[Corollary 2]{BMV} is wrong.
%


\subsection{Evolution test ideal}\label{edrat}

We will show that given a commutative finite-dim\-en\-sional algebra $A$ over a field $\K$ there is an ideal in a certain polynomial algebra which detects if $A$ is an evolution algebra. This will allow to apply tools of algebraic geometry in this setting:  consider the affine space $\K^m$ and $H$
an ideal of the polynomial $\K$-algebra $\K[x_1,\ldots,x_m]$. We
define the set of zeros of $H$ as the set of common zeros of the polynomials in $H$, that is,  $V(H):=\{x\in\K^m\colon q(x)=0,\ \forall q\in H\}$ (here we use the duality polynomial-function).

Assume $\dim(A)=n$ and consider the polynomial algebra in the $n^2+1$ indeterminates of the set $\{z\}\sqcup \{x_{ij}\}_{i,j=1}^n$, so we have the polynomial algebra $R:=\K[\{z\}\sqcup \{x_{ij}\}_{i,j=1}^n]$. Fix now a basis $\{e_i\}_{i=1}^n$ of $A$ and write the product of $A$ in the form given in equation \eqref{mrof} which gives the family of inner products $\{\esc{\cdot,\cdot}_i\}_{i=1}^n$. Define $M_k$ as the Gram matrix of the inner product $\esc{\cdot,\cdot}_k$ in any basis (the same basis for all the inner products). 

\begin{definition} \rm
In the above conditions, we consider the ideal $J$ of $R$ generated by the polynomials $p_0$ and $p_{ijk}$ defined as: 
$$
\begin{array}{rl}
p_0(x_{11},\ldots,x_{nn},z) & :=  1-z\det[(x_{ij})_{i,j=1}^n]
\end{array}
$$
and
$$
\begin{array}{rl}
      p_{ijk}(x_{i1},\ldots,x_{in},x_{j1},\ldots,x_{jn}) &:=(x_{i1},\ldots,x_{in})M_k(x_{j1},\ldots,x_{jn})^t
\end{array}
$$
where $i,j,k=1,\ldots,n$, $i\neq j$.
The ideal $J$ will be called the {\em evolution test ideal} of $A$ though its form depends of the chosen basis.

\end{definition}

The name of this ideal $J$ is motivated by the following proposition.

\begin{proposition} Let $A$ be a $n$-dimensional commutative algebra over a field $\K$ and $J$ the evolution test ideal of $A$ (fixed a basis). Then $A$ is an evolution algebra if and only if $V(J)\ne\emptyset$. 
\end{proposition}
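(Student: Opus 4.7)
The plan is to show that points of $V(J)$ are in bijective correspondence with natural bases of $A$ (expressed as invertible matrices of coordinates in the fixed basis), so the proposition amounts to decoding the meaning of the defining polynomials of $J$.

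First I would unpack $p_0$. The relation $p_0(x_{11},\ldots,x_{nn},z)=0$ reads $z\det[(x_{ij})]=1$, which is the standard Rabinowitsch device: it is satisfiable by some value of $z$ if and only if the matrix $X=(x_{ij})$ is invertible. Hence a point of $V(J)$ projects (by forgetting the $z$-coordinate) to an invertible matrix, and conversely every invertible $X$ lifts uniquely to a point with $z=\det(X)^{-1}$.

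Next I would decode $p_{ijk}$. By construction $M_k$ is the Gram matrix of $\esc{\cdot,\cdot}_k$ in the fixed basis $\{e_i\}_{i=1}^n$, so if we let $u_i:=\sum_{l=1}^n x_{il}e_l$ denote the vector whose coordinates are the $i$-th row of $X$, then
\[
p_{ijk}(X)=(x_{i1},\ldots,x_{in})M_k(x_{j1},\ldots,x_{jn})^t=\esc{u_i,u_j}_k.
\]
Therefore the equations $p_{ijk}=0$ (for $i\neq j$, all $k$) assert precisely that the basis $\{u_i\}_{i=1}^n$ of $A$ is orthogonal with respect to every inner product $\esc{\cdot,\cdot}_k$ simultaneously.

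Finally I would close the loop using the identity $xy=\sum_k\esc{x,y}_ke_k$ from equation~\eqref{mrof}. This shows $u_iu_j=0$ for $i\neq j$ if and only if $\esc{u_i,u_j}_k=0$ for every $k$. Consequently, $V(J)\neq\emptyset$ is equivalent to the existence of an invertible $X$ whose rows yield a natural basis of $A$, which is exactly the definition of $A$ being an evolution algebra. Both implications follow at once: given a natural basis, its coordinate matrix provides a point of $V(J)$; conversely, any point of $V(J)$ yields such a basis. There is no real obstacle here, since the statement is essentially a translation between algebraic geometry and the already-established equivalence between being an evolution algebra and simultaneous orthogonalizability of the inner products $\esc{\cdot,\cdot}_k$.
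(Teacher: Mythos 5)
Your proposal is correct and follows essentially the same argument as the paper: decode $p_0$ as the invertibility condition on the coordinate matrix and $p_{ijk}$ as the orthogonality conditions $\esc{u_i,u_j}_k=0$, then translate back to $u_iu_j=0$ via equation \eqref{mrof}. (Your remark that the lift of an invertible $X$ uses $z=\det(X)^{-1}$ is also the correct reading of $p_0$.)
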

\begin{proof}
Write the product of $A$ in the form $xy=\sum_{k=1}^n \esc{x,y}_ke_k$ where $B=\{e_k\}_{k=1}^n$ is a basis of $A$. Let $M_k$ be the matrix of each $\esc{\cdot,\cdot}_k$ relative to $B$ for $k=1,\ldots n$. 
If $A$ is an evolution algebra, fix a natural basis $\{u_q\}_{q=1}^n$ of $A$. For $q=1,\ldots, n$, assume that the coordinates of  $u_q$ relative to $B$ are $(u_{q1},\ldots,u_{qn})$. Since $\esc{u_p,u_q}_k=0$ if $p\ne q$, we have  
$p_{ijk}(u_{i1},\ldots,u_{in},u_{j1},\ldots,u_{jn})=0$ for $i\ne j$
and $p_0(u_{11},\ldots,u_{nn},\Delta)=0$ where $\Delta=\det[(u_{ij})_{i,j=1}^n]$. Consequently 
$$(u_{11},\ldots, u_{nn},\Delta)\in V(J).$$
Conversely if $(u_{11},\ldots, u_{nn},\Delta)\in V(J)$, then 
$p_0(u_{11},\ldots, u_{nn},\Delta)=0$ which implies that 
the vectors $u_q:=(u_{q1},\ldots,u_{qn})$ (for $q=1,\ldots,n$) are a basis
of the vector space. Furthermore, we also have $p_{ijk}(u_{i1},\ldots,u_{in},u_{j1},\ldots,u_{jn})=0$ for $i,j,k\in\{1,\ldots, n\}$ and $i\ne j$. This tell us that if $i\ne j$, the vectors $u_i$ and $u_j$ are orthogonal relative to $\esc{\cdot,\cdot}_k$ for any $k$.
Whence $\{u_q\}_{q=1}^n$ is a natural basis of $A$.
\end{proof}

If we have $1\in J$, then $V(J)=\emptyset$ hence $A$ is not an evolution algebra.
In the case of an algebraically closed field $\K$ the condition $1\in J$ is equivalent to $V(J)=\emptyset$ by the Hilbert's Nullstellensatz. 

To see how this works in low dimension, consider the following example:

\begin{example} \rm
Take an evolution algebra over a field $\K$ with product given by the inner products of matrices as in Example \ref{ejem1}.
The evolution test ideal is
the ideal $J$ of $\K[x_{11},x_{12},x_{21},x_{22},z]$ generated by the polynomials:
$$\begin{matrix}
   -x_{11}
   x_{21}+x_{12}
   x_{21}+x_{11}
   x_{22}+x_{12}
   x_{22}\cr x_{11}
   x_{21}+x_{12}
   x_{21}+x_{11}
   x_{22}-x_{12} x_{22}\cr -z
   x_{12} x_{21}+z x_{11}
   x_{22}-1.
   \end{matrix}$$
In case $\text{char}(\K)=2$ the ideal is just 
the generated by 
$$x_{11}
   x_{21}+x_{12}
   x_{21}+x_{11}
   x_{22}+x_{12}
   x_{22},\quad z
   x_{12} x_{21}+z x_{11}
   x_{22}+1.$$
 For $\K={\mathbf F}_2$ the zeros of the polynomial above must  have $z=1$, $x_{12}x_{21}+x_{11}x_{22}=1$ hence  $x_{11}
   x_{21}+x_{12}
   x_{22}=1$ 
  and we have (among others) the solution $x_{11}=x_{12}=1$, $x_{21}=0$, $x_{22}=1$. Since any field of characteristic two contains ${\mathbf F}_2$ we find that $A$ is an evolution algebra when the ground field is of characteristic two. 
  If $\K$ has characteristic other than $2$, a Gro\"ebner basis of $J$ is the set $$\left\{x_{21}^2+x_{22}^2, 2 z
   x_{12} x_{22}^2-x_{21}, 2
   z x_{12} x_{21}+1, 2 z
   x_{22}
   x_{12}^2+x_{11}\right\}.$$
Observe that if $(x_{11},x_{12},x_{21},x_{22},z)$ is a zero of the polynomials, then $x_{21}\ne 0$ which implies $x_{22}\ne 0$. In this case, $\frac{x_{21}}{x_{22}}$ has to be a square root of $-1$. Consequently if $\sqrt{-1}\notin\K$ the algebra is not an evolution algebra. If $\sqrt{-1}\in\K$, then we have a solution $x_{11}=x_{21}=1$, $x_{12}=-x_{22}={\bf i}:=\sqrt{-1}$ and $z=\frac{\bf i}{2}$. So the zero set of the evolution test ideal says that $A$ is  an evolution algebra only when $\sqrt{-1}\in\K$.
\end{example}
The evolution test ideal of a three-dimensional algebra could involve a set of  $1$  quartic polynomial of $10$ variables and $9$ quadratic polynomials of $6$ variables.  In this case the computations are more involved and it seems reasonable to use another tools
to elucidate if a given algebra is an evolution algebra or not.

\section{The nondegenerate case}\label{S:1}

Let $\K$ be a field and $V$ an $n$-dimensional vector space over $\K$. Let $I$ be a nonempty set and for $i\in I$, let $\esc{\cdot \ , \cdot }_i$ be an inner product on $V$, that is, symmetric bilinear form $\esc{\cdot\  , \cdot }_i \ \colon V\times V\to \K$. The question is: under what conditions is the family $\{\esc{\cdot,\!\cdot}_i\}_{i\in I}$ simultaneously orthogonalizable? 
In other words, under what conditions is there a basis $\{v_j\}_{j=1}^n$ of $V$ such that for any $i\in I$ we have   
$\esc{v_j,v_k}_i=0$ whenever $j\ne k$?
Of course each inner product must be orthogonalizable hence this is a necessary condition. 
As a first approach we will assume that one of the inner products is nondegenerate and we will denote this as $\esc{\cdot,\!\cdot}_0$. 
Under this assumption there is a canonical isomorphism $V\cong V^*$, where $V^*$ is the dual space of $V$. The isomorphism is defined by  
$x\mapsto\esc{x,\_}_0$. 
In the next lemma we find necessary and sufficient conditions to ensure that a family of inner products is \so. In this case the ground field $\K$ is arbitrary. 

\begin{lemma}\label{ogacem}
Assume that $\F=\{\esc{\cdot,\!\cdot}_i\}_{i\in I\cup\{0\}}$ is a family of inner products in the finite-dimensional vector space $V$ over $\K$, and that $\esc{\cdot,\!\cdot}_0$ is nondegenerate. Then:
\begin{enumerate}
\item \label{ogacem1} For each $i\in I$
there is a linear map $T_i\colon V\to V$ such that $\esc{x,y}_i=\esc{T_i(x),y}_0$ \ for any $x,y\in V$. Furthermore, each $T_i$ is a self-adjoint operator of $(V,\esc{\cdot , \cdot}_0)$.
\item \label{ogacem2}$\F$ is \so\  if and only if there exists an orthogonal basis $B$ of $(V,\esc{\cdot , \cdot}_0)$ such that each $T_i$ is diagonalizable relative to $B$.
\end{enumerate}
\end{lemma}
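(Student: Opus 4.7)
The plan is to exploit the nondegeneracy of $\esc{\cdot,\cdot}_0$ to translate each further inner product into a $\esc{\cdot,\cdot}_0$-self-adjoint operator on $V$, and then to recast the simultaneous orthogonalizability of $\F$ as the simultaneous diagonalizability of these operators in a single $\esc{\cdot,\cdot}_0$-orthogonal basis.

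For item (1), I would fix $i\in I$ and, for each $x\in V$, consider the linear functional $y\mapsto\esc{x,y}_i$ on $V$. Because $V$ is finite-dimensional and $\esc{\cdot,\cdot}_0$ is nondegenerate, the canonical map $V\to V^*$, $z\mapsto\esc{z,\_}_0$, is an isomorphism, so there is a unique vector $T_i(x)\in V$ with $\esc{T_i(x),y}_0=\esc{x,y}_i$ for every $y\in V$. Linearity of $T_i$ is forced by the bilinearity of the two forms together with this uniqueness. Self-adjointness is then a one-line computation from symmetry:
$$\esc{T_i(x),y}_0=\esc{x,y}_i=\esc{y,x}_i=\esc{T_i(y),x}_0=\esc{x,T_i(y)}_0.$$

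For item (2), the key preliminary observation is that any basis $B=\{v_1,\ldots,v_n\}$ of $V$ orthogonal with respect to $\esc{\cdot,\cdot}_0$ must satisfy $\esc{v_j,v_j}_0\ne 0$ for every $j$; otherwise $v_j$ would be $\esc{\cdot,\cdot}_0$-orthogonal to all of $V$, contradicting nondegeneracy. Granted this, the forward implication is immediate: if $B$ simultaneously orthogonalizes $\F$, then for each $i$ and each pair $j\ne k$ we have $\esc{T_i(v_j),v_k}_0=\esc{v_j,v_k}_i=0$, and combined with the nonvanishing of $\esc{v_j,v_j}_0$ this forces $T_i(v_j)$ to be a scalar multiple of $v_j$, so $T_i$ is diagonal in $B$. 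For the converse, if $B$ is $\esc{\cdot,\cdot}_0$-orthogonal and $T_i(v_j)=\lambda_{i,j}v_j$ for each $i,j$, then $\esc{v_j,v_k}_i=\esc{T_i(v_j),v_k}_0=\lambda_{i,j}\esc{v_j,v_k}_0=0$ for $j\ne k$, so the same $B$ orthogonalizes every member of $\F$.

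I do not expect a genuine obstacle here: the lemma is a translation between two viewpoints (a family of bilinear forms versus a family of operators attached to a reference form), and its proof is essentially bookkeeping. The only subtle point is the nonisotropy of the vectors of a $\esc{\cdot,\cdot}_0$-orthogonal basis, which is elementary but essential for the forward direction of (2). Since this argument uses only symmetry and the nondegeneracy of $\esc{\cdot,\cdot}_0$, it is valid over an arbitrary field $\K$, including characteristic two.
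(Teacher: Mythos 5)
Your proof is correct and follows essentially the same route as the paper's: item (1) via the isomorphism $V\cong V^*$ induced by $\esc{\cdot,\cdot}_0$, and item (2) by showing that a simultaneously orthogonalizing basis forces each $T_i(v_j)$ into $\K v_j$ and conversely. The only cosmetic difference is that you isolate the non-isotropy of the $\esc{\cdot,\cdot}_0$-orthogonal basis vectors, whereas the paper shows directly that $T_i(v_j)-a_{ij}^j v_j$ is orthogonal to the whole basis and hence lies in the (trivial) radical; both are valid over any field.
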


\begin{proof}
To prove item \ref{ogacem1}, fix $x\in V$ and $i \in I$ and consider the element 
$\esc{x,\_}_i\in V^*$. From the isomorphism $V\cong V^*$ above, we conclude that  there is a unique $a_x^i\in V$ such that $\esc{x,\_}_i=\esc{a_x^i,\_}_0$. So for any $y\in V$ we have $\esc{x,y}_i=\esc{a_x^i,y}_0$ for every $i \in I$.
Now, for any $i\in I$ define $T_i\colon V\to V$ by $T_i(x)=a_x^i$. For proving the linearity of $T_i$ take into account that 
$$\esc{a_{x+y}^i-a_x^i-a_y^i,\_}_0=\esc{x+y,\_}_i-\esc{x,\_}_i-\esc{y,\_}_i=0,$$
$$\esc{a_{\lambda x}^i-\lambda a_x^i,\_}_0=\esc{\lambda x,\_}_i-\lambda \esc{x,\_}_i=0,$$
so nondegeneracy of $\esc{\cdot,\!\cdot}_0$ gives the linearity of each $T_i$. Now, let us prove that $\esc{T_i(x),y}_0=\esc{x,T_i(y)}_0$ for any $x,y\in V$ and $i \in I$:
$$\esc{T_i(x),y}_0=\esc{x,y}_i=\esc{y,x}_i=
\esc{T_i(y),x}_0=\esc{x,T_i(y)}_0.$$
Now we prove item \ref{ogacem2},  assume that $\F$ is simultaneously orthogonalizable. Let $B=\{v_j\}$ be a basis of $V$ with $\esc{v_j,v_k}_i=0$ for $j\ne k$ and any $i\in I\cup\{0\}$. 
For $i \in I$ we write $T_i(v_j)=\sum_k a_{ij}^k v_k$ we have 
$$\esc{T_i(v_j)-a_{ij}^j v_j, v_k}_0=\esc{T_i(v_j), v_k}_0-a_{ij}^j\esc{ v_j, v_k}_0 = \esc{v_j, v_k}_i=0 \hbox{ if $k\ne j$ }.$$
$$\esc{T_i(v_j)-a_{ij}^j v_j, v_j}_0= \sum_{q\ne j}a_{ij}^q\esc{v_q,v_j}_0=0.$$ And then
$T_i(v_j)\in \K v_j$ for arbitrary 
$i \in I$ and $j$. Thus each self-adjoint operator $T_i$ is diagonalizable in the basis $B$. So we have proved that
there is an orthogonal basis of $V$ relative to $\esc{\cdot,\!\cdot}_0$ such that each $T_i$ diagonalizes relative to that basis.
Reciprocally, assume that for any $i\in I$ we have that each $T_i$ is diagonalizable relative to a certain orthogonal basis $B=\{v_j\}$ respect to $\esc{\cdot,\!\cdot}_0$. 
Thus $T_i(v_j)\in \K v_j$ and 
we can write $T_i(v_j)=a_{ij}v_j$ for some $a_{ij}\in \K$.
So $\F$ is simultaneously orthogonalizable in $B$ since for any $i,j,k$ with $j\ne k$ we have: 
$$\esc{v_j,v_k}_i=\esc{T_i(v_j),v_k}_0=a_{ij}\esc{v_j,v_k}_0=0.$$
\end{proof}

\begin{remark} \rm
In the conditions of Lemma \ref{ogacem} note that $T_iT_j=T_jT_i$ for any $i,j\in I$.
\end{remark}

A well known result that we will apply in the sequel is that for a finite-dimensional vector space $V$, any commutative family of diagonalizable linear maps  $\{T_i\}_{i\in I}$ is simultaneously diagonalizable in the sense that there is a basis $B$ of $V$ such that each $T_i$ in the family is diagonalizable relative to $B$.

We use the symbol $\oPerp$ to denote orthogonal direct sum. 
If $\F$ is a family of inner products in a vector space $V$ and
we have subspaces $S,T\subset V$ such that $V=S\oplus T$ and $S$, $T$ are orthogonal relative to all the inner products in $\F$, we will use the notation $V=S\oPerp_{\tiny\F} T$. The symbol $\oPerp_{\F}$ will be abbreviated to $\oPerp$ if no confusion is
possible.

\begin{lemma}\label{eneiton}
Let $V$ be a finite-dimensional vector space over a field $\K$ of characteristic other than $2$, endowed with a nondegenerate inner product $\esc{\cdot,\!\cdot}\colon V\times V\to \K$. Assume that $\mathcal{T}$ is an commutative family of self-adjoint  diagonalizable linear maps of $V\to V$. Then there is an orthogonal basis $B$ of $(V,\esc{\cdot,\!\cdot})$ such that all the elements of $\mathcal{T}$ are diagonal relative to $B$.
\end{lemma}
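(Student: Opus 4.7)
The plan is to combine the simultaneous diagonalizability of commuting diagonalizable operators with the orthogonality of eigenspaces for self-adjoint maps, and finally invoke the characteristic-not-two hypothesis to produce orthogonal bases on the pieces.

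First I would apply the classical fact recalled just before the lemma: since $\mathcal{T}$ is a commutative family of diagonalizable operators on the finite-dimensional space $V$, there is a basis in which every $T\in\mathcal{T}$ is diagonal. Equivalently, $V$ decomposes as a finite direct sum
\[
V=\bigoplus_{\alpha\in\Lambda} V_\alpha,
\]
where each $V_\alpha$ is a joint eigenspace, i.e.\ there is a function $\alpha\colon\mathcal{T}\to\K$, $T\mapsto\alpha(T)$, such that $T|_{V_\alpha}=\alpha(T)\,\mathrm{id}_{V_\alpha}$ for every $T\in\mathcal{T}$. (Although $\mathcal{T}$ may be infinite, only finitely many distinct joint eigenvalue tuples actually occur because $V$ is finite-dimensional.)

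Next I would show that this decomposition is orthogonal with respect to $\esc{\cdot,\!\cdot}$. Pick two distinct joint eigenspaces $V_\alpha$ and $V_\beta$; they are distinguished by the value on some $T\in\mathcal{T}$, so $\alpha(T)\ne\beta(T)$. For $x\in V_\alpha$ and $y\in V_\beta$, self-adjointness of $T$ gives
\[
\alpha(T)\esc{x,y}=\esc{T(x),y}=\esc{x,T(y)}=\beta(T)\esc{x,y},
\]
hence $(\alpha(T)-\beta(T))\esc{x,y}=0$ and therefore $\esc{x,y}=0$. Thus
\[
V=\mathop{\oPerp}_{\alpha\in\Lambda} V_\alpha.
\]

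Now I would argue that the restriction of $\esc{\cdot,\!\cdot}$ to each $V_\alpha$ is nondegenerate. This follows from the orthogonal decomposition above together with the nondegeneracy of the form on $V$: if $x\in V_\alpha$ were in the radical of the restriction, then, being orthogonal to every other summand as well, it would lie in the radical of $\esc{\cdot,\!\cdot}$ on $V$, forcing $x=0$. Since $\mathrm{char}(\K)\ne 2$, each $V_\alpha$ with its nondegenerate inner product admits an orthogonal basis $B_\alpha$ (the classical result recalled in Section \ref{tarde}). Taking $B=\bigcup_\alpha B_\alpha$ yields a basis of $V$ that is orthogonal relative to $\esc{\cdot,\!\cdot}$ and in which every $T\in\mathcal{T}$ acts diagonally, because each $v\in B_\alpha$ lies in $V_\alpha$ and hence satisfies $T(v)=\alpha(T)v$.

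The only delicate point is the step producing an orthogonal basis inside each $V_\alpha$, and this is exactly where the hypothesis $\mathrm{char}(\K)\ne 2$ enters; without it one could not guarantee that the nondegenerate restricted form on $V_\alpha$ has an orthogonal basis, as the $\FF_2$ example in Section \ref{tarde} illustrates. Everything else is purely linear-algebraic and uses only that $\mathcal{T}$ consists of commuting, diagonalizable, self-adjoint operators.
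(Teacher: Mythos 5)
Your proof is correct and follows essentially the same route as the paper: decompose $V$ into joint eigenspaces of the commuting diagonalizable family, use self-adjointness to show distinct joint eigenspaces are orthogonal, and then orthogonalize each summand using $\mathrm{char}(\K)\ne 2$ (the paper obtains the separating operator $T$ via a minimality argument on decompositions rather than directly from the definition of joint eigenspaces, but this is the same idea). Your extra step verifying that the form restricts nondegenerately to each $V_\alpha$ is correct but not needed, since in characteristic other than two every symmetric bilinear form on a finite-dimensional space, degenerate or not, admits an orthogonal basis, which is the fact the paper invokes.
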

\begin{proof}
We know that there is a basis $C=\{v_1,\ldots,,v_n\}$ of $V$ such that each $T\in\mathcal{T}$ diagonalizes with respect to $C$. If $C$ is an orthogonal basis we are done. Otherwise we consider the following set of families 
$$
\begin{array}{cc}
   \D = \left\{\{V_i\}_{i \in I} \text{ a family of vector subspaces of $V$} \  \vert \ \ V=\oplus_{i\in I} V_i \  \text{and} \right.\nonumber\\ \qquad \left. {} \ T\vert_{V_i}=\lambda_i(T) 1_{V_i} \ \  \forall \ T\in\mathcal{T}\right\}.   
\end{array}
$$ 
Observe that $\D \neq \emptyset$ because $\{\K v_i\}_{i=1}^n \in \D$. Let  $\{V_i\}_{i \in I}$ be a family in $\D$ such that the cardinal of $I$ is minimum. Take $i,j\in I$ different. We will prove that there exists $T\in  \T$ such that $\lambda_i(T) \neq \lambda_j (T)$. Indeed, if for any $T\in  \T$ we have $\lambda_i(T) =\lambda_j (T)$, then we redefine the decomposition of $V$ in direct sum of vector subspaces in the following way: let $J:=(I\setminus\{i,j\})\sqcup\{q\}$. Then the cardinal of $J$ is lower than the cardinal of $I$. Next define
$$\begin{cases}W_q:=V_i \oplus V_j,\cr W_k:=V_k\ (k \neq i,j)\end{cases}$$ 
Note that $T\vert_{W_j}=\mu_j(T)1_{W_j}$ for some scalars $\mu_j(T)\in\K$. 
Then $\{W_j\}_{j\in J}\in\D$ and the cardinal of $J$ is lower than the cardinal of $I$, a contradiction. Hence there is some $T\in\T$ such that $\l_i(T)\ne\l_j(T)$. Let us check that $V_i\bot V_j$: take $0\ne x\in V_i$ and $0\ne y\in V_j$, then
$$\l_i(T)\esc{x,y}=\esc{T(x),y}=\esc{x,T(y)}=\l_j(T)\esc{x,y}$$
whence $\esc{x,y}=0$. Thus we have $V=\oPerp_{i\in} V_i$ and since the characteristic of the ground field is not $2$, each $V_i$ has an orthogonal basis. So we are done.
\end{proof}

Now we can summarize Lemma \ref{ogacem} and Lemma \ref{eneiton} as follows:

\begin{theorem}\label{nomam}
Assume that $\F=\{\esc{\cdot,\!\cdot}_i\}_{i\in I\cup\{0\}}$ is a family of inner products in the finite-dimensional vector space $V$ over a field $\K$ and that $\esc{\cdot,\!\cdot}_0$ is nondegenerate. Then for each $i\in I$
there is a linear map $T_i\colon V\to V$ such that $\esc{x,y}_i=\esc{T_i(x),y}_0$ \ for any $x,y\in V$. Furthermore, each $T_i$ is a self-adjoint operator of $(V,\esc{\cdot , \cdot}_0)$. We also have
\begin{enumerate}
\item  The family $\F$ is simultaneously orthogonalizable if and only if each $T_i$ is diagonalizable relative to an orthogonal basis of $V$ relative to $\esc{\cdot,\!\cdot}_0$.
\item If $\hbox{\rm char}(\K)\ne 2$ the family $\F$  is simultaneously orthogonalizable if and only if $\{T_i\}_{i\in I}$ is a commutative family of  diagonalizable endomorphisms of $V$.
\end{enumerate}
\end{theorem}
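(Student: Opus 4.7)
The theorem is essentially a packaging of the two preceding lemmas, so the proof plan is to assemble their content and fill in the small logical gaps.

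The first two assertions (existence of each $T_i$ with $\esc{x,y}_i = \esc{T_i(x),y}_0$ and its self-adjointness relative to $\esc{\cdot,\cdot}_0$) are literally the content of Lemma \ref{ogacem}(\ref{ogacem1}), obtained from the duality $V \cong V^*$ induced by the nondegenerate $\esc{\cdot,\cdot}_0$. I would just cite that lemma. Similarly, item (1) is exactly Lemma \ref{ogacem}(\ref{ogacem2}), and I would simply restate it.

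For item (2), I would handle the two implications separately. For the forward direction, assume $\F$ is simultaneously orthogonalizable. By item (1), there is an orthogonal basis $B$ of $(V,\esc{\cdot,\cdot}_0)$ with respect to which every $T_i$ is diagonal. Since all $T_i$ are diagonal in the same basis, they automatically commute and are diagonalizable (this is also recorded in the Remark following Lemma \ref{ogacem}). Note this direction needs no assumption on $\hbox{char}(\K)$.

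For the reverse direction, the characteristic hypothesis enters. Assume $\{T_i\}_{i \in I}$ is a commutative family of diagonalizable endomorphisms. By the standard fact recalled just before Lemma \ref{eneiton} (a commuting family of diagonalizable operators is simultaneously diagonalizable), there is a basis of $V$ in which all $T_i$ are simultaneously diagonal. Moreover, each $T_i$ is self-adjoint with respect to $\esc{\cdot,\cdot}_0$ by the first part of the theorem. We are therefore in the exact setting of Lemma \ref{eneiton}, which, assuming $\hbox{char}(\K) \ne 2$, yields an orthogonal basis $B$ of $(V,\esc{\cdot,\cdot}_0)$ in which every $T_i$ is diagonal. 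Now item (1) gives that $\F$ is simultaneously orthogonalizable.

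There is no real obstacle: the one place the characteristic hypothesis is essential is in applying Lemma \ref{eneiton}, which in turn needs it only to split the common eigenspaces into orthogonal bases (a step that can fail in characteristic $2$, as illustrated by the example $\tiny\begin{pmatrix}0 & 1 \\ 1 & 0\end{pmatrix}$ over $\FF_2$ discussed in Section \ref{tarde}). The only thing worth a sentence of emphasis is that the forward implication in (2) is unconditional on the characteristic, since it follows directly from item (1) without invoking Lemma \ref{eneiton}.
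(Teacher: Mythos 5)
Your proposal is correct and matches the paper's approach exactly: the paper itself introduces Theorem \ref{nomam} with the phrase ``Now we can summarize Lemma \ref{ogacem} and Lemma \ref{eneiton} as follows,'' so the intended proof is precisely the assembly you describe (Lemma \ref{ogacem} for the existence, self-adjointness, and item (1); the Remark plus Lemma \ref{eneiton} for item (2)). Your added observation that the forward implication of (2) is independent of the characteristic is accurate and harmless.
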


\begin{remark} \rm
Observe that the basis $B$ relative to which $\F$ is simultaneously orthogonalizable, coincides with the basis diagonalizing each $T_i$ with $i \in I$. 
\end{remark}

In the context of the above Lemma \ref{eneiton} if we fix a basis 
$B=\{v_j\}$ of $V$,  we will denote by $M_B(T_i)$ the matrices of $T_i$ relative to $B$. 
This means that $M_B(T_i)=(a_{ij}^k)_{j,k}$ where $T_i(v_j)=\sum_k a_{ij}^k v_k$ for any $i$ and $j$.
On the other hand, the matrices $M_{i,B}:=(\esc{v_j,v_k}_i)_{j,k}$ of the inner products $\esc{\cdot,\!\cdot}_i$ in $B$ are related by the
equations  
$\esc{v_j,v_t}_i=\esc{T_i(v_j),v_t}_0=
\sum_k a_{ij}^k\esc{v_k,v_t}_0$, that is, 
$M_{i,B}=M_B(T_i)M_{0,B}$. Equivalently $M_B(T_i)=M_{i,B}M_{0,B}^{-1}$. Summarizing, we have the following.

\begin{corollary}\label{rabia}
Fix a basis $B$ of a vector space $V$ of finite dimension over a field $\K$ with $\hbox{\rm char}(\K)\ne 2$ and assume that $\F=\{\esc{\cdot,\!\cdot}_i\}_{i\in I\cup\{0\}}$ is a family of inner products on $V$ whose matrices in $B$ are $M_{i,B}$. Further assume that $M_{0,B}$ is nonsingular. Then $\F$ is simultaneously orthogonalizable if and only if 
the collection of matrices $\{M_{i,B}M_{0,B}^{-1}\}_{i\in I}$ is commutative and 
each one of them is diagonalizable.
\end{corollary}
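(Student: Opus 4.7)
The plan is to recognize that this corollary is essentially a matrix-level restatement of Theorem \ref{nomam}(2), using the relation $M_B(T_i)=M_{i,B}M_{0,B}^{-1}$ that is derived in the paragraph immediately preceding the corollary. So the work is a straightforward translation between operators and their matrices in a fixed basis.

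First I would invoke Theorem \ref{nomam} to produce, for each $i\in I$, the self-adjoint operator $T_i\colon V\to V$ characterized by $\esc{x,y}_i=\esc{T_i(x),y}_0$, which is available because $M_{0,B}$ nonsingular means $\esc{\cdot,\!\cdot}_0$ is nondegenerate. Next I would record the matrix identity $M_B(T_i)=M_{i,B}M_{0,B}^{-1}$ already observed in the paragraph preceding the statement; this comes from $\esc{v_j,v_t}_i=\esc{T_i(v_j),v_t}_0$ which reads in matrix form as $M_{i,B}=M_B(T_i)M_{0,B}$.

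Then I would use the standard fact that the map $T\mapsto M_B(T)$ is a $\K$-algebra isomorphism from $\operatorname{End}_\K(V)$ onto the matrix algebra, so the family $\{T_i\}_{i\in I}$ is commutative if and only if $\{M_B(T_i)\}_{i\in I}=\{M_{i,B}M_{0,B}^{-1}\}_{i\in I}$ is commutative, and each $T_i$ is diagonalizable if and only if its matrix $M_{i,B}M_{0,B}^{-1}$ is diagonalizable (having a common eigenbasis is the same whether viewed as vectors of $V$ or as columns of $\K^n$). Finally, applying Theorem \ref{nomam}(2) gives both implications simultaneously, since \textbf{char}$(\K)\ne 2$ is assumed.

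There is no real obstacle here; the only thing one has to be slightly careful about is not to re-prove the existence of the basis producing simultaneous orthogonalization (that is already handled inside Theorem \ref{nomam}), and to note that the characteristic-two hypothesis is genuinely required for the \emph{if} direction because item (2) of Theorem \ref{nomam} depends on Lemma \ref{eneiton}, whose proof uses that each common-eigenspace $V_i$ admits an orthogonal basis.
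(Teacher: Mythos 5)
Your proposal is correct and follows exactly the route the paper takes: the relation $M_B(T_i)=M_{i,B}M_{0,B}^{-1}$ from the paragraph preceding the corollary, combined with the algebra isomorphism between endomorphisms and matrices in the fixed basis, reduces the statement to Theorem \ref{nomam}(2). The only cosmetic slip is in your final remark, where you write ``the characteristic-two hypothesis'' when you mean the hypothesis $\hbox{\rm char}(\K)\ne 2$; the mathematical point you make there is nevertheless the right one.
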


Next, we illustrate Theorem \ref{nomam} and Corollary \ref{rabia} with two examples over a field $\K$, one of them with $\hbox{\rm char}(\K)\ne 2$ and another one with $\hbox{\rm char}(\K)= 2$.
\begin{example}\rm
Consider the inner products given by the matrices
$$M_0=\left(
\begin{array}{ccc}
 4 & 2 & 3 \\
 2 & 2 & 1 \\
 3 & 1 & 3 \\
\end{array}
\right),\quad M_1=\left(
\begin{array}{ccc}
 0 & 0 & 1 \\
 0 & 0 & 1 \\
 1 & 1 & 1 \\
\end{array}
\right),\quad M_2=\left(
\begin{array}{ccc}
 -3 & -2 & -2 \\
 -2 & -2 & -1 \\
 -2 & -1 & -2 \\
\end{array}
\right)$$
relative to a certain basis $B$ of $\K^3$. Assume that the characteristic of $\K$ is other than $2$ so that $M_0$ is nonsingular (we shall investigate the singular case later on). Are these inner products simultaneously orthogonalizable?  We have: 
$$M_1M_0^{-1}=\left(
\begin{array}{ccc}
 -2 & 1 & 2 \\
 -2 & 1 & 2 \\
 -1 & 1 & 1 \\
\end{array}
\right) ,\quad 
M_2M_0^{-1}=\left(
\begin{array}{rrr}
 -\frac{1}{2} & -\frac{1}{2} & 0 \\
 0 & -1 & 0 \\
 \frac{1}{2} & -\frac{1}{2} & -1 \\
\end{array}
\right)$$
which can be seen to commute. Moreover $M_1M_0^{-1}$ is diagonalizable since its minimal polynomial is 
$-x(x+1)(x-1)$. Also $M_2M_0^{-1}$ is diagonalizable its minimal polynomial being $-(x+1)(x+\frac{1}{2})$. Thus, there is basis which is orthogonal relative to the three inner products.
If we want to find a basis orthogonalizing all the inner product it suffices to diagonalize simultanously the matrices $M_1M_0^{-1}$ and 
$M_2M_0^{-1}$. For the first one, the eigenspace of eigenvalue $0$ is 
generated  by $v_1=(1,-1,0)$, the one of eigenvalue $1$ is generated by $v_2=(-1,1,1)$ and that of eigenvalue $-1$ by $v_3=(1,0,-1)$. 
But $v_1M_2M_0^{-1}=-\frac{1}{2}v_1$ while $v_2M_2M_0^{-1}=-v_2$ and $v_3M_2M_0^{-1}= -v_3$.
Then the matrices of the inner products in the basis $\{v_1, v_2, v_3\}$ are
$$\left(
\begin{array}{ccc}
 2 & 0 & 0 \\
 0 & 1 & 0 \\
 0 & 0 & 1 \\
\end{array}
\right),\quad 
\left(
\begin{array}{rrr}
 0 & 0 & 0 \\
 0 & 1 & 0 \\
 0 & 0 & -1 \\
\end{array}
\right)\hbox{ and }
\left(
\begin{array}{rrr}
 -1 & 0 & 0 \\
 0 & -1 & 0 \\
 0 & 0 & -1 \\
\end{array}
\right).$$
\end{example}

Since our methods include also the characteristic two case we can handle an example like the following.

\begin{example}\label{3olpmeje}  \rm

Let $\K$ be a field of characteristic two and $V=\K^3$. Let 
$\F$ be the family of inner products whose Gram matrices relative to the canonical
basis are:
$$M_0=\left(
\begin{array}{ccc}
 1 & 0 & 0 \\
 0 & 0 & 1 \\
 0 & 1 & 0 \\
\end{array}
\right),\ M_1=\left(
\begin{array}{ccc}
 0 & 0 & 1 \\
 0 & 0 & 1 \\
 1 & 1 & 1 \\
\end{array}
\right),\ M_2=\left(
\begin{array}{ccc}
 0 & 1 & 0 \\
 1 & 1 & 1 \\
 0 & 1 & 0 \\
\end{array}
\right).$$
Consider the inner product on $V$ given by $\esc{x,y}_0:=xM_0y^t$. Define next
the linear maps $T_1,T_2\colon V\to V$ given by $T_i(x)=xM_iM_0^{-1}$ for $i=1,2$.
We have
$$M_1M_0^{-1}=\left(
\begin{array}{ccc}
 0 & 1 & 0 \\
 0 & 1 & 0 \\
 1 & 1 & 1 \\
\end{array}
\right),\ M_2M_0^{-1}=\left(
\begin{array}{ccc}
 0 & 0 & 1 \\
 1 & 1 & 1 \\
 0 & 0 & 1 \\
\end{array}
\right)$$
and both matrices are diagonalizable, being a basis of simultaneous eigenvectors for both matrices $v_1=(1,0,1)$, $v_2=(1,1,0)$, $v_3=(1,1,1)$. Since these vectors
are pairwise orthogonal relative to $\esc{\cdot,\cdot}_0$, Theorem \ref{nomam}(1) implies that $\F$ is \so. An orthogonal basis for $\F$ is $B=\{v_1,v_2,v_3\}$ and the Gram matrices of the $M_i$'s relative to $B$ are:
$$\left(
\begin{array}{ccc}
 1 & 0 & 0 \\
 0 & 1 & 0 \\
 0 & 0 & 1 \\
\end{array}
\right), \left(
\begin{array}{ccc}
 1 & 0 & 0 \\
 0 & 0 & 0 \\
 0 & 0 & 1 \\
\end{array}
\right), \left(
\begin{array}{ccc}
 0 & 0 & 0 \\
 0 & 1 & 0 \\
 0 & 0 & 1 \\
\end{array}
\right).$$
\end{example}

\section{The degenerate case}\label{Miller}

Assume as before that $\K$ is a field and $V$ a vector space over $\K$. Recall that for an inner product $\esc{\cdot,\!\cdot}\colon V\times V\to \K$, we denote by $V^{\perp}$ the subspace $V^{\perp}:=\{x\in V\colon\esc{x,V}=0\}$ that we will call the {\it radical} of the inner product. We will also use the notation $\rad(V,\esc{\cdot,\cdot})$ for $V^\bot$ if we want to be more accurate. If there is no possible confusion we will use $\rad(\esc{\cdot,\cdot})$. If there is an orthogonal basis $\{v_i\}$ of $V$ relative to an inner product $\esc{\cdot,\!\cdot}\colon V\times V\to \K$, then the radical of the inner product is the linear span of all the $v_i$'s such that $\esc{v_i,v_i}=0$.

 Let $I$ be a nonempty set and for $i\in I$, let $\F=\{\esc{\cdot,\!\cdot}_i\}_{i\in I}$ be a family of inner products on $V$ and consider $\rad(V,\esc{\cdot,\cdot}_i)$. We define the {\it radical of the family} $\F$ by $\rad(\F):=\cap_{i\in I}\rad(V,\esc{\cdot,\cdot}_i)$.
There is a subspace $W$ of $V$ such that $V=\rad(\F)\oplus W$ and $\esc{\rad(\F),W}_i=0$ for any $i\in I$. Indeed, 
any subspace $W$ complementing $\rad(\F)$ satisfies $\esc{\rad(\F),W}_i=0$ for any $i$. So we can write $V=\rad(\F)\oPerp_\F W$. Then we have the following result.


\begin{proposition}\label{tomato}
Let $V$ be a vector space of arbitrary dimension over a field $\K$. Let  $\F=\{\esc{\cdot,\!\cdot}_i\}_{i\in I}$ be a family of inner products on $V$.
There is a subspace $W$ of $V$ (in fact any complement of $\rad(\F)$) such that $V=\rad(\F)\oPerp_\F W$. Moreover, the family of inner products $\F\vert_W=\{\esc{\cdot,\!\cdot}_i\vert_W\}_{i\in I}$ on $W$ satisfies 
$\rad(\F\vert_W)=0$
and the collection 
$\F$ is simultaneously orthogonalizable if and only if $\F\vert_W$ is simultaneously orthogonalizable.
\end{proposition}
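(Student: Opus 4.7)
My plan is to treat the three claims in sequence, each of which reduces to a short bilinear calculation once the right book-keeping is in place. For the orthogonal decomposition $V=\rad(\F)\oPerp_\F W$, I would just observe that $\rad(\F)\subseteq\rad(V,\esc{\cdot,\cdot}_i)$ for every $i\in I$, so any $r\in\rad(\F)$ already satisfies $\esc{r,V}_i=0$; consequently any vector-space complement $W$ of $\rad(\F)$ is automatically $\F$-orthogonal to $\rad(\F)$. For the vanishing of $\rad(\F|_W)$, I would take $w\in\rad(\F|_W)$ and, decomposing an arbitrary $v\in V$ as $r+w'$ with $r\in\rad(\F)$ and $w'\in W$, use the splitting
\begin{equation*}
\esc{w,v}_i=\esc{w,r}_i+\esc{w,w'}_i=0,
\end{equation*}
where the first summand vanishes by symmetry together with $r\in\rad(V,\esc{\cdot,\cdot}_i)$ and the second by the choice of $w$; this forces $w\in\rad(\F)\cap W=\{0\}$.

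For the equivalence of simultaneous orthogonalizability, the ``if'' direction will be straightforward: given a simultaneous orthogonal basis $\{w_j\}$ of $W$ for $\F|_W$, I would glue it to any basis $\{r_k\}$ of $\rad(\F)$ to obtain a basis of $V$ in which all products of distinct basis vectors vanish under every $\esc{\cdot,\cdot}_i$ --- within the $r_k$'s automatically, within the $w_j$'s by hypothesis, and across the two by the orthogonality of the decomposition just proved.

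The converse is the step that will require more care, because $W$ is an \emph{arbitrary} complement of $\rad(\F)$ and need not contain any of the given basis vectors. Starting from a simultaneous orthogonal basis $B=\{v_j\}_{j\in\Lambda}$ of $V$ for $\F$, I would first check that $\rad(\F)$ is spanned by exactly those $v_j$ lying in $\rad(\F)$: writing $r=\sum\lambda_j v_j\in\rad(\F)$, orthogonality of $B$ yields $\lambda_k\esc{v_k,v_k}_i=\esc{r,v_k}_i=0$ for every $i,k$, and whenever $v_k\notin\rad(\F)$ the same orthogonality guarantees some $\esc{v_k,v_k}_i\neq 0$, forcing $\lambda_k=0$. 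This splits $B$ into a part $B_R\subseteq\rad(\F)$ and a complementary part $B'$ that spans a (possibly different) complement $W'$ of $\rad(\F)$. To transfer the orthogonal basis from $W'$ to the prescribed $W$ I would compose with the projection $\pi\colon V\to W$ along $\rad(\F)$: the vectors $u_j:=\pi(v_j)$ for $v_j\in B'$ form a basis of $W$, and since each $v_j-u_j\in\rad(\F)$ all cross terms drop out, giving $\esc{u_j,u_k}_i=\esc{v_j,v_k}_i=0$ for $j\neq k$. The main (and really only) obstacle I foresee is precisely this mismatch between the ``canonical'' complement $W'$ produced by $B$ and the arbitrary prescribed $W$; projecting along $\rad(\F)$ is the clean way around it.
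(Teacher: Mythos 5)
Your proposal is correct and follows essentially the same route as the paper: both proofs establish $\rad(\F\vert_W)=0$ by the identical computation and then obtain the orthogonal basis of $W$ by projecting a simultaneous orthogonal basis of $V$ onto $W$ along $\rad(\F)$, which kills all cross terms. The only (immaterial) difference is how you verify that the projected vectors form a basis of $W$: you first show $\rad(\F)$ is spanned by the basis vectors it contains and invoke the isomorphism $\pi\vert_{W'}\colon W'\to W$, whereas the paper checks spanning and linear independence of the nonzero projections directly, using $\rad(\F\vert_W)=0$ for the independence.
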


\begin{proof}
Observe that by the definition of $\rad(\F)$ we have  
$$\rad(\F\vert_W)=0.$$
Indeed, if $x\in W$ satisfying $\esc{x,W}_i=0$ for any $i\in I$, then $\esc{x,V}_i=\esc{x,\rad(\F)}_i+\esc{x,W}_i=0$
implying $x\in\rad(\F)$. So $x=0$. Clearly, if the family $\F\vert_{W}$ is simultaneously orthogonalizable, then $\F$ is simultaneously orthogonalizable. Conversely take a basis  $\{e_j\}_{j\in J}$ of $V$ such that for any $j,k \in J$ with $j\ne k$ and $i \in I$ verifying $\esc{e_j,e_k}_i=0$. For any $j\in J$ write $e_j=r_j+w_j$ with $r_j\in\rad(\F)$ and $w_j\in W$. We have 
  $$0=\esc{e_j,e_k}_i=\esc{w_j,w_k}_i$$ which proves that the collection of vectors $\{w_j\}_{j\in J}$ is orthogonal relative to any inner product $\esc{\cdot,\!\cdot}_i$.
  Now define the set $J_1:=\{j\in J\colon w_j\ne 0\}$, we see that $\{w_j\}_{j\in J_1}$ is a basis of $W$. First, we show that it is a system of generators of $W$: take $w\in W$ then $w=\sum_j \l_je_j=\sum_j \l_j r_j+\sum_j \l_j w_j$, ($\l_j\in \K$). Thus $W\ni w-\sum_j\l_j w_j=\sum_j\l_j r_j\in\rad(\F)$ hence $w=\sum_j\l_j w_j$. In order to prove the linear independence, consider scalars $\l_j$ and assume $\sum_{j\in J_1}\l_j w_j=0$. Then for any $i\in I$ and $k\in J_1$ we have $0=\sum_{j\in J_1}\l_j\esc{w_j,w_k}_i=\l_k\esc{w_k,w_k}_i$.
  So if  $\l_k\ne 0$, then $\esc{w_k,w_k}_i=0$ for any $i$. Therefore  $w_k\in \rad({\F\vert_{W}})=0 $ whence $w_k=0$, a contradiction (because $k\in J_1$). Thus $\{w_j\}_{j\in J_1}$ is an orthogonal basis of $W$ relative to any $\esc{\cdot,\!\cdot}_i$.
\end{proof}

So we can reduce the problem of orthogonalizing a collection of inner products $\F=\{\esc{\cdot,\!\cdot}\}_{i\in I}$ to the case
in which $\rad(\F)=0$. 

\begin{example}\rm
Consider the vector space $\K^4$ and the family $\F$ on inner products given by the matrices below: 
$$N_1=
\left(
\begin{array}{rrrr}
3 & -2 & -2 & 0 \\
-2 & 2 & 1 & -1 \\
-2 & 1 & 2 & 1 \\
0 & -1 & 1 & 2 \\
\end{array}
\right),\ N_2=
\left(
\begin{array}{rrrr}
0 & 0 & -1 & -1 \\
0 & 0 & 1 & 1 \\
-1 & 1 & 1 & 0 \\
-1 & 1 & 0 & -1 \\
\end{array}
\right)$$
$$N_3=\left(
\begin{array}{rrrr}
3 & -1 & -3 & -2 \\
-1 & 1 & 1 & 0 \\
-3 & 1 & 3 & 2 \\
-2 & 0 & 2 & 2 \\
\end{array}
\right),\
N_4=\left(
\begin{array}{rrrr}
2 & -1 & -1 & 0 \\
-1 & 1 & 0 & -1 \\
-1 & 0 & 1 & 1 \\
0 & -1 & 1 & 2 \\
\end{array}
\right).
$$
Now, we analyze if the family $\F$ is \so. For a generic $v\in \K^4$, solving the equations $vN_i=0$ ($i=1,2,3,4$) we find that $\rad(\F)=\K(0,1,-1,1)$ so $\K^4=\rad(\F)\oplus W$ where $W$ can be taken to be the linear span of $$e_1=(1,0,1,1),\ e_2=(1,1,0,1),\ e_3=(1,1,1,0).$$ The restriction of the inner products to $W$ is given by the linearly independent matrices
$$M_0=\left(
\begin{array}{ccc} 5 & 2 & 0\cr 2 & 1 & 0\cr 0 & 0 & 1\end{array}\right),\ 
M_1=\left(
\begin{array}{rrr}-4 & -2 & 0\cr -2 & -1 & 0\cr 0 & 0 & 1\end{array}\right),\
M_2=\left(\begin{array}{ccc}
 2 & 0 & 0\cr 0 & 0 & 0\cr 0 & 0 & 1\end{array}\right),$$
relative to the basis $\{e_1,e_2,e_3\}$ of $W$. By Remark \ref{atupoi}, observe that the maximum number of linearly independent inner products on $W$ has to be three 
if we want $W$ to have an orthogonal basis relative to them.
Since $\vert M_0\vert=1$ we can apply the procedure explained in Corollary \ref{rabia}. Then 
$$\small M_0^{-1}=\left(
\begin{array}{rrrr}
1 & -2 & 0 \\
-2 & 5 & 0 \\
0 & 0 & 1\\
\end{array}
\right),\ M_1M_0^{-1}= \left(
\begin{array}{rrrr}
0 & -2 & 0 \\
0 & -1 & 0 \\
0 & 0 & 1\\
\end{array}
\right),\ M_2M_0^{-1}= \left(
\begin{array}{rrrr}
2 & -4 & 0 \\
0 & 0 & 0 \\
0 & 0 & 1\\
\end{array}
\right),$$
and it can be checked that $M_1M_0^{-1}$ commutes with $M_2M_0^{-1}$ and that both matrices are diagonalizable since their characteristic polynomials are $x(x+1)(x-1)$ and $x(x-1)(x-2)$. We conclude that $\F$ is simultaneously orthogonalizable. If we want to find a basis which is orthogonal relative to $\F$ we first find a basis of $W$ which orthogonalizes the inner products of matrices $M_0$, $M_1$ and $M_2$. For this, it suffices to simultaneously diagonalize the matrices  $A_i=M_iM_0^{-1}$, with $i=1,2$. A basis of common eigenvectors for both matrices is 
$\{e_1-2 e_2,e_2,e_3\}$. Moreover defining
$$v_0:=(0,-1,1,-1), v_1:=e_1-2e_2=(1,0,1,1)-2(1,1,0,1)=(-1,-2,1,-1),$$
$$v_2:=e_2=(1,1,0,1),\ v_3:=e_3=(1,1,1,0),$$ we get a basis $\{v_i\}_{i=0}^3$ of $\K^4$ such that the matrices of the inner products relative to this basis are
$\hbox{diag}(0,1,1,1)$, $\hbox{diag}(0,0,-1,1)$, $\hbox{diag}(0,2,0,1)$ and
$\hbox{diag}(0,1,1,0)$ respectively. In this example we have had the good luck that after modding out $\rad(\F)$, the restrictions of the inner products to $W$ have been in the conditions of the nondegenerate case. However, as we will see, it is not necessary to be lucky in order to solve the problem successfully. 
\end{example}

Let $V$ be a $\K$-vector space. For a subset $X\subseteq V$ we denote by $\span(X)$ the $\K$-linear span of $X$.

\begin{definition}{\rm
Let $\F$ and $\F'$ be two families of  inner products over the same $\K$-vector space $V$, that is, $\F, \, \, \F' \subset \L^2(V \times V;\K)$. We say that $\F$ and $\F'$ are \emph{equivalent} and we write $\F\sim \F' $ if and only if  $\span(\F)=\span(\F')$.
}
\end{definition}

Observe that if $\F\sim\F'$, then a basis $B$ of $V$ orthogonalizes $\F$ if and only if $B$ orthogonalizes $\F'$.

Next we observe that under mild hypothesis on the nature of the ground field, the fact that $\rad(\F)=0$ implies the existence of a nondegenerate inner product in a family  $\F'$  \so\  with  $\F'\sim \F$:

\begin{theorem}\label{notluf}
Assume that $\K$ is an infinite field and $\F$ a family of \so\ inner products in a finite-dimensional $\K$-vector space $V$ such that $\rad(\F)=0$. Then there is  a family $\F'$   with $\F \sim \F'$ such that $\F'$ has a nondegenerate inner product.
\end{theorem}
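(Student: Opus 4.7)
The plan is to use the simultaneous orthogonalizability of $\F$ to reduce the problem to finding a single element of $\span(\F)$ whose Gram matrix (necessarily diagonal in the right basis) has no zero entries on the diagonal, and then to produce such an element by invoking the fact that an infinite-dimensional affine search space over an infinite field cannot be covered by finitely many hyperplanes.

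More precisely, I would begin by fixing a basis $B=\{v_1,\ldots,v_n\}$ of $V$ that simultaneously orthogonalizes $\F$, and set $a_{ij}:=\esc{v_j,v_j}_i$ for each $i\in I$ and $1\le j\le n$. The key translation is that the hypothesis $\rad(\F)=0$ means precisely that for every index $j$, the family of scalars $\{a_{ij}\}_{i\in I}$ is not identically zero; indeed, if all $a_{ij}$ vanished then $v_j$ would be orthogonal to every basis vector relative to every $\esc{\cdot,\cdot}_i$, placing $v_j$ in $\rad(\F)$.

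Next I would reduce to a finite subfamily. The subspace of bilinear forms on $V$ whose Gram matrix in $B$ is diagonal is $n$-dimensional, so $\span(\F)$ is finite-dimensional; choose a finite subset $\{f_1,\ldots,f_m\}\subseteq\F$ that spans $\span(\F)$. Note this subfamily also has zero radical, since $\rad$ depends only on the span. For scalars $\lambda=(\lambda_1,\ldots,\lambda_m)\in\K^m$ the Gram matrix of $f_\lambda:=\sum_k\lambda_k f_k$ relative to $B$ is the diagonal matrix with entries $\phi_j(\lambda):=\sum_k\lambda_k a_{kj}$. Each $\phi_j\colon\K^m\to\K$ is a nonzero linear functional by the previous paragraph, and $f_\lambda$ is nondegenerate iff $\lambda$ avoids each hyperplane $H_j:=\ker(\phi_j)$.

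The crux is now the standard fact that over an infinite field $\K$, the vector space $\K^m$ is not a union of finitely many proper hyperplanes; this yields some $\lambda\in\K^m\setminus\bigcup_{j=1}^n H_j$ (I would either cite this or dispatch it by a one-line induction on $m$). The corresponding $f_\lambda\in\span(\F)$ is nondegenerate, and setting $\F':=\F\cup\{f_\lambda\}$ gives a family with $\span(\F')=\span(\F)$, i.e.\ $\F\sim\F'$, containing a nondegenerate inner product; the simultaneous orthogonalizability of $\F'$ by the same basis $B$ is automatic since orthogonality of a pair of vectors is preserved under taking linear combinations of the forms. The main obstacle, such as it is, is the hyperplane-avoidance step, which is exactly where the hypothesis that $\K$ is infinite is used; every other step works verbatim over any field.
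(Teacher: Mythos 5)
Your proposal is correct and follows essentially the same route as the paper: both fix a simultaneously orthogonalizing basis, reduce to a finite subfamily, identify $\rad(\F)=0$ with the nonvanishing of each linear form $\lambda\mapsto\sum_k\lambda_k a_{kj}$, and produce a nondegenerate combination $\sum_k\lambda_k f_k$ by choosing $\lambda$ outside finitely many proper hyperplanes (the paper phrases this step as: the product polynomial $\prod_j(\sum_i x_ia_{ij})$ cannot vanish identically on $\K^n$ when $\K$ is infinite unless some linear factor is zero, which is the contrapositive of your hyperplane-avoidance argument). No gaps.
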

\begin{proof}
Since $V$ is finite-dimensional, without loss of generality, we may assume that $\F$ is finite because it is equivalent to a finite family $\F'$.  
So to fix ideas write $\F=\{\esc{\cdot,\cdot}_i\}_{i=1}^n$.
If we take any collection of scalars $\lambda_1,\ldots,\lambda_n\in\K$ we can construct the inner product $\escd{\cdot,\cdot}:=\sum_1^n \l_i\esc{\cdot,\cdot}_i$. Then
$\F\cup\{\escd{\cdot,\cdot}\}$ is \so\ if and only if $\F$ is. 
We will replace $\F$ with $\F\cup\{\escd{\cdot,\cdot}\}$ and prove that $\escd{\cdot,\cdot}$ is nondegenerate for some values of $\lambda_1,\ldots,\lambda_n\in\K$. Assume on the contrary that $\escd{\cdot,\cdot}$ is degenerate for any choice of $\lambda_1,\ldots,\lambda_n\in\K$. Then the determinant of the Gram matrix of $\escd{\cdot,\cdot}$ is zero. Since $\F$ is \so\ the matrices
of $\esc{\cdot,\cdot}_i$ are diagonal relative to some basis $B=\{v_1,\ldots,v_m\}$ of $V$. So
the matrix of $\esc{\cdot,\cdot}_i$ is $\hbox{diag}(a_{i1},\ldots,a_{im})$. Consequently the
matrix of $\escd{\cdot,\cdot}$ is $\hbox{diag}(\sum_i \l_i a_{i1},\ldots,\sum_i\l_i a_{im})$. Since the determinant of the Gram matrix of $\esc{\cdot,\cdot}$ is zero we get 
$$\prod_j(\sum_i\l_i a_{ij})=0,$$
for any $(\l_1,\ldots,\l_n)\in\K^n$. Consider the polynomial algebra $\K[x_1,\ldots,x_n]$ in the $n$-indeterminates $x_1,\ldots,x_n$.
The polynomial $p\in\K[x_1,\ldots,x_n]$ given by $p=\prod_j(\sum_i x_i a_{ij})$ vanishes everywhere  (for any values of the variables in $\K$). Since $\K$ is an infinite field, following \cite[Section 8.1.3, item(8), Chapter 8]{Ash} or \cite[Section 1.3, item(7), Chapter 1]{Fulton} we have $p\in I(A^n)=0$ (here $A^n$ is the $n$-dimensional afin space $\K^n$). We get $p=0$ and since $p$ is the product of the homogeneous polynomials $q_j:=\sum_i x_i a_{ij}$ some of these factors must be $0$.
But if some $q_j=0$, then $a_{ij}=0$ for any $i$. Denoting by
$\xi_B(x)$ the coordinates of any $x\in V$ relative to $B$ we have 
$$\esc{v_j,x}_i = \xi_B(v_j)\hbox{diag}(a_{i1},\ldots,a_{im})\xi_B(x)^t=$$   
$$ (\underbrace{0,\ldots,1}_j,0,\ldots, 0)\hbox{diag}(a_{i1},\ldots,a_{im})\xi_B(x)^t=(\underbrace{0,\ldots,a_{ij}}_j,0,\ldots, 0)\xi_B(x)^t=0 
$$
for any $i$ and $x\in V$. Thus $v_j\in\rad(\F)=0$ a contradiction.
So in the family $\F\cup\{\escd{\cdot,\cdot}\}$ there is a nondegenerate inner product. Note that any basis which is orthogonal for all $\F$ is also orthogonal for  $\F\cup\{\escd{\cdot,\cdot}\}$ and conversely.
\end{proof}
As a consequence of the proof given in Theorem \ref{notluf} we have this result.

\begin{corollary}\label{otiel}
If $\K$ is an infinite field, and $\F$ is a \so\ family of inner products in the finite-dimensional vector space $V$ over $\K$ with $\rad(\F)=0$, then $\F$ can be enlarged by adding at most one more inner product linear combination of those in $\F$, so that the new family has a nondegenerate inner product.
\end{corollary}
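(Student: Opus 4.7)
The plan is to observe that this corollary is essentially a book-keeping restatement of what has just been proved, so the work is in identifying precisely which object in the proof of Theorem~\ref{notluf} is being added to $\F$.

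First I would reduce to the case in which $\F$ is finite. Since $\F\subset\L^2(V\times V;\K)$ and $\L^2(V\times V;\K)$ is finite-dimensional (as $V$ is), the linear span of $\F$ admits a finite basis $\{\esc{\cdot,\cdot}_{i_1},\ldots,\esc{\cdot,\cdot}_{i_n}\}\subset\F$. Any inner product in $\F$ (in particular one witnessing that $\rad(\F)=0$) is a linear combination of these, so replacing $\F$ by this finite subfamily does not alter the hypotheses of Theorem~\ref{notluf} nor the conclusion I want, because any linear combination of the $\esc{\cdot,\cdot}_{i_j}$ is in particular a linear combination of the elements of $\F$.

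Next I would dispose of the trivial case: if some $\esc{\cdot,\cdot}_i\in\F$ is already nondegenerate, then no enlargement is needed and the statement holds with zero extra inner products added. Otherwise, apply (the proof of) Theorem~\ref{notluf} to the finite subfamily $\{\esc{\cdot,\cdot}_{i_1},\ldots,\esc{\cdot,\cdot}_{i_n}\}$. That proof produces scalars $\lambda_1,\ldots,\lambda_n\in\K$ such that the single inner product
\[
\escd{\cdot,\cdot}:=\sum_{j=1}^{n}\lambda_j\esc{\cdot,\cdot}_{i_j}
\]
is nondegenerate; here the infinitude of $\K$ is used precisely to guarantee that the polynomial $p(x_1,\ldots,x_n)=\prod_j(\sum_i x_ia_{ij})$ in the diagonal entries does not vanish identically unless some row of diagonal entries is zero (which would contradict $\rad(\F)=0$). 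Setting $\F'':=\F\cup\{\escd{\cdot,\cdot}\}$ adds exactly one inner product, and by construction $\escd{\cdot,\cdot}\in\span(\F)$, so $\escd{\cdot,\cdot}$ is a linear combination of inner products in $\F$ as required.

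Finally I would remark that this enlargement preserves simultaneous orthogonalizability: any basis $B$ orthogonalizing $\F$ orthogonalizes every linear combination of its members, hence orthogonalizes $\escd{\cdot,\cdot}$, so $\F''$ is still \so. The only nontrivial step is the appeal to Theorem~\ref{notluf}, and there is no real further obstacle; the corollary is a direct packaging of the constructive content of that theorem's proof, with the mild observation that one does not need to enlarge $\F$ at all when it already contains a nondegenerate form.
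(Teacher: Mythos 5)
Your proposal is correct and follows the same route as the paper, which simply derives the corollary from the constructive content of the proof of Theorem~\ref{notluf}: the inner product $\escd{\cdot,\cdot}=\sum_i\lambda_i\esc{\cdot,\cdot}_i$ built there is the one added, and it is by construction a linear combination of members of $\F$. Your extra care in choosing the finite spanning subfamily inside $\F$ and in noting the trivial case where no enlargement is needed is consistent with (and slightly more explicit than) the paper's treatment.
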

The hypothesis that $\K$ must be infinite in Corollary \ref{otiel} is essential as the following example shows. 

\begin{example} \rm Consider $\K={\mathbf F}_2$ the field of two elements and the $\mathbf F_2$-vector space $V={\mathbf F_2}^3$. Let $\F$ be the family of inner products whose Gram matrices relative to the canonical basis of $V$ are 
$$\tiny\left(
\begin{array}{ccc}
 0 & 1 & 0 \\
 1 & 1 & 1 \\
 0 & 1 & 0 \\
\end{array}
\right),\quad \left(
\begin{array}{ccc}
 1 & 0 & 1 \\
 0 & 1 & 1 \\
 1 & 1 & 0 \\
\end{array}
\right).$$
It can be checked that $\rad(\F)=0$ but any linear combination of those two inner product has matrix 
$$\tiny\left(
\begin{array}{ccc}
 y & x & y \\
 x & x+y & x+y \\
 y & x+y & 0 \\
\end{array}
\right)$$
whose determinant is $xy(x+y)$ and vanishes for all $x,y \in\mathbf F_2$. So, the family $\F$ is degenerate. Note that $\F$ is \so\ for the basis $B=\{ (1,1,1), (1,0,1), (0,1,1)\}$.
\end{example}

\begin{corollary}
Let $\K$ be an infinite field, and $\F=\{\esc{\cdot,\cdot}_i\}_{i=1}^n$  a family of inner products in the finite-dimensional vector space $V$ over $\K$. Assume that $\rad(\F)=0$ and that for any 
$(\l_1,\ldots,\l_n)\in\K^n$ the inner product 
$\escd{\cdot,\cdot}:=\sum_{i=1}^n\l_i\esc{\cdot,\!\cdot}_i$ is degenerate.
Then $\F$ is not \so.
\end{corollary}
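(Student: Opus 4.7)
The plan is to deduce this corollary as the contrapositive of Corollary~\ref{otiel}. Suppose for contradiction that $\F$ is simultaneously orthogonalizable. Since $\K$ is infinite and $\rad(\F)=0$, Corollary~\ref{otiel} guarantees that $\F$ can be enlarged by adjoining at most one further inner product, which is of the form $\escd{\cdot,\cdot}=\sum_{i=1}^n\lambda_i\esc{\cdot,\cdot}_i$ for some $\lambda_i\in\K$, such that the enlarged family contains a nondegenerate inner product. The only candidate for this nondegenerate element is $\escd{\cdot,\cdot}$ itself, since the original members of $\F$ are all degenerate by hypothesis (they arise as the trivial combinations $\esc{\cdot,\cdot}_i$ and $0$). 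This contradicts the standing assumption that every linear combination $\sum_i\lambda_i\esc{\cdot,\cdot}_i$ is degenerate.

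If I preferred to give a self-contained argument without invoking Corollary~\ref{otiel}, I would retrace the polynomial trick from the proof of Theorem~\ref{notluf}. Fix a basis $B=\{v_1,\ldots,v_m\}$ of $V$ that simultaneously orthogonalizes $\F$, so each $\esc{\cdot,\cdot}_i$ has Gram matrix $\diag(a_{i1},\ldots,a_{im})$ in $B$; then the Gram matrix of $\escd{\cdot,\cdot}$ is $\diag(\sum_i\lambda_i a_{i1},\ldots,\sum_i\lambda_i a_{im})$, and its determinant is $\prod_j\sum_i\lambda_i a_{ij}$. The degeneracy hypothesis says this vanishes for every $(\lambda_1,\ldots,\lambda_n)\in\K^n$. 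Since $\K$ is infinite, the polynomial $p(x_1,\ldots,x_n)=\prod_j\sum_i x_i a_{ij}\in\K[x_1,\ldots,x_n]$ is the zero polynomial; being a product of linear forms in an integral domain, some factor $\sum_i x_i a_{ij_0}$ is identically zero, so $a_{ij_0}=0$ for every $i$. But then $v_{j_0}\in\rad(\F)=0$, contradicting the fact that $v_{j_0}$ belongs to a basis of $V$.

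There is essentially no obstacle here, since the substantive polynomial-vanishing argument is already carried out in the proof of Theorem~\ref{notluf}; the corollary is merely its contrapositive reformulation, and the only thing to verify is that the enlarging inner product produced in Corollary~\ref{otiel} is visibly a linear combination of the members of $\F$.
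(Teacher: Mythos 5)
Your proposal is correct and matches the paper's intent: the corollary is stated there without proof precisely because it is the contrapositive of Theorem~\ref{notluf} (equivalently, of Corollary~\ref{otiel}), and your self-contained second argument reproduces the polynomial-vanishing step of that theorem's proof verbatim. Nothing further is needed.
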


\def\p#1{\esc{\cdot,\!\cdot}_{#1}}

\begin{definition}\rm 
Let $V$ be a vector space over a field $\K$. If  $\F=\{\esc{\cdot,\!\cdot}\}_{i\in I}$ is a family of inner products on  $V$ labelled by $I$, a subspace $S\subset V$ is said to be \emph{$\F$-supplemented} if
there exists a subspace $S'\subset V$ such that $V=S\oPerp_{\F} S'$. The subspace $S'$ is said to be an \emph{$\F$-supplement} of $S$.
\end{definition}

For instance, let $V$ be a $\K$-vector space of arbitrary dimension and let $\F=\{\esc{\cdot,\!\cdot}\}_{i\in I}$ be a family of inner products on $V$ which is simultaneously orthogonalizable. Then for any fixed $i$ the radical $\rad(\p{i})$ is $\F$-supplemented. To see this, consider a basis $\{e_j\}_{j\in J}$ of $V$ which is orthogonal relative to $\F$. Then
$\rad(\p{i})$ is the linear span of all $e_j$'s such that
$\esc{e_j,e_j}_i=0$. So define $S'$ as the linear span of the remaining $e_j$'s. We have $V=\rad(\p{i})\oPerp_{\F} S'$. Thus:


\begin{proposition}\label{supl}
Let $V$ be an arbitrary-dimensional $\K$-vector space and let $\F=\{\esc{\cdot,\!\cdot}_i\}_{i\in I}$ be a family of inner products on $V$.
A necessary condition for $\F$ to be simultaneously orthogonalizable is that the radical of each inner product of $\F$ be $\F$-supplemented. Furthermore, for any basis $\{e_j\}_{j\in J}$ simultaneously orthogonalizing $\F$, and any $i\in I$ there is subset $J_i\subset J$ such that 
$\{e_j\}_{j\in J_i}$ is a basis of $\rad(\p{i})$.
\end{proposition}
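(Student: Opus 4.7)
The plan is to produce an explicit $\F$-supplement of $\rad(\esc{\cdot,\cdot}_i)$ directly from any basis that simultaneously orthogonalizes $\F$. Suppose $\F$ is simultaneously orthogonalizable and fix a basis $\{e_j\}_{j\in J}$ of $V$ with $\esc{e_j,e_k}_\ell=0$ for all $\ell\in I$ whenever $j\ne k$. Fix $i\in I$ and define
\[
J_i:=\{j\in J\colon \esc{e_j,e_j}_i=0\},\qquad S:=\span\{e_j\}_{j\in J_i},\qquad S':=\span\{e_j\}_{j\in J\setminus J_i}.
\]
Since $J=J_i\sqcup(J\setminus J_i)$ we immediately have $V=S\oplus S'$ as vector spaces.

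First I would show that $S=\rad(\p{i})$. The inclusion $S\subset\rad(\p{i})$ is clear: for $j\in J_i$, the orthogonality of the basis gives $\esc{e_j,e_k}_i=0$ for all $k\ne j$, and $\esc{e_j,e_j}_i=0$ by definition of $J_i$, so $e_j\in\rad(\p{i})$. For the reverse inclusion, take $x=\sum_{j\in J}c_j e_j\in\rad(\p{i})$ (a finite sum). Pairing $x$ against each $e_k$ and using simultaneous orthogonality gives $0=\esc{x,e_k}_i=c_k\esc{e_k,e_k}_i$. Hence $c_k=0$ whenever $k\notin J_i$, and $x\in S$. This proves $S=\rad(\p{i})$ and, as a by-product, establishes the \emph{Furthermore} part: the subset $J_i$ is the one we need, and $\{e_j\}_{j\in J_i}$ is automatically a basis of $\rad(\p{i})$ since it is linearly independent.

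Next I would verify that $S'$ is an $\F$-supplement of $\rad(\p{i})$. We already have $V=\rad(\p{i})\oplus S'$. Orthogonality of $\rad(\p{i})$ and $S'$ with respect to every $\esc{\cdot,\cdot}_\ell$ is immediate from the hypothesis on the basis: for $j\in J_i$ and $k\in J\setminus J_i$ we have $j\ne k$, so $\esc{e_j,e_k}_\ell=0$ for every $\ell\in I$, and bilinearity extends this to all of $\rad(\p{i})\times S'$. Hence $V=\rad(\p{i})\oPerp_{\F}S'$, as required.

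There is no real obstacle here beyond setting up the notation carefully; the argument is essentially the one sketched in the paragraph preceding the proposition, now recorded as a proof. The only mild subtlety is that $V$ is allowed to be infinite-dimensional, so the sums $x=\sum_j c_j e_j$ must be understood as finite linear combinations — this is automatic from the definition of a Hamel basis, and both directions of the argument respect it.
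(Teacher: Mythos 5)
Your proof is correct and follows essentially the same route as the paper, which establishes the proposition in the paragraph immediately preceding it by taking $\rad(\p{i})$ to be the span of the isotropic basis vectors and the span of the remaining ones as the supplement. The only difference is that you explicitly verify the inclusion $\rad(\p{i})\subseteq\span\{e_j\}_{j\in J_i}$ via the computation $0=\esc{x,e_k}_i=c_k\esc{e_k,e_k}_i$, which the paper asserts without detail.
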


The following lemma shows the relationship between $\F$ and $\F \vert_{S}$.

\begin{lemma} \label{elpus} 
Let $V$ be a $\K$-vector space of arbitrary dimension. Let $\F=\{\esc{\cdot,\!\cdot}_i\}_{i\in I}$ be a family of inner products. Let  $S$ be a subspace of $V$ and suppose that there exists $S'$ a $\F$-supplement. Then:
\begin{enumerate}
    \item \label{moka} If $\F \vert_ S$ and $\F \vert_ {S'}$ are \so\ \!\!, then $\F$ is \so.
    \item If $\rad(\F)=0$, then $\rad(\F \vert_{S'})=\rad(\F \vert_{S})=0$.
\end{enumerate}
\end{lemma}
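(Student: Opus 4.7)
The plan is to exploit, in both parts, the defining property of an $\F$-supplement: every inner product in $\F$ makes $S$ and $S'$ orthogonal, and every $v\in V$ decomposes uniquely as $v=s+s'$ with $s\in S$, $s'\in S'$.

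For part \ref{moka}, I would pick bases $B_S=\{u_j\}_{j\in J}$ of $S$ and $B_{S'}=\{w_k\}_{k\in K}$ of $S'$ that simultaneously orthogonalize $\F|_S$ and $\F|_{S'}$ respectively. Because $V=S\oplus S'$, the union $B:=B_S\cup B_{S'}$ is a basis of $V$. It remains to check that $B$ orthogonalizes every $\esc{\cdot,\cdot}_i$: pairs of distinct vectors inside $B_S$ are orthogonal by the choice of $B_S$, pairs inside $B_{S'}$ by the choice of $B_{S'}$, and a pair with one vector in each subspace is orthogonal since $V=S\oPerp_\F S'$.

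For part (2), I would check $\rad(\F|_S)=0$ (the argument for $S'$ is identical by symmetry). Take $x\in\rad(\F|_S)\subseteq S$, so $\esc{x,s}_i=0$ for every $s\in S$ and every $i\in I$. Given an arbitrary $v\in V$, write $v=s+s'$ with $s\in S$, $s'\in S'$; then
\[
\esc{x,v}_i=\esc{x,s}_i+\esc{x,s'}_i=0+0=0,
\]
the first term being zero by hypothesis and the second since $x\in S$ and $S\perp_\F S'$. Hence $x\in\rad(\F)=0$, so $x=0$.

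No real obstacle is to be expected: both parts are essentially unpacking the definitions of $\F$-supplement and of the radical, and the only subtle point is to remember that orthogonality of $S$ and $S'$ under $\F$ is built into the notation $\oPerp_\F$, which supplies the two key cancellations in the arguments above.
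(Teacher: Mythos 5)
Your proof is correct; the paper actually omits the proof of this lemma entirely (treating it as immediate from the definitions), and your argument is exactly the natural unpacking it has in mind: in part (1) the union of the two orthogonalizing bases works because mixed pairs are orthogonal by the definition of $\oPerp_\F$, and in part (2) the decomposition $v=s+s'$ shows $\rad(\F\vert_S)\subseteq\rad(\F)$.
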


We observe that the converse of item \eqref{moka} of Lemma \ref{elpus}  is not true in general as the following example shows:

\begin{example}{\rm
We consider a field $\K$ of characteristic $2$ and $V=\K^3$ with $\F=\{\esc{\cdot,\cdot}\}$ being $\esc{\cdot,\cdot}$ the inner product whose matrix in the canonical basis $\{e_i\}_{i=1}^3$ is:
$$\small\begin{pmatrix}1 & 0 & 0\cr 0 & 0 & 1\cr 0 & 1 & 0\end{pmatrix}.$$
Now decompose $V=\K e_1\oPerp(\K e_2\oplus \K e_3)$. The subspace $\K e_2\oplus \K e_3$ is not orthogonalizable (since any vector is isotropic), however $V$ has an orthogonal basis $\{e_1+e_2+e_3,e_1+e_2,e_1+e_3\}$.
}
\end{example}

In view of Proposition \ref{supl} and Lemma \ref{elpus}  we may expect to construct a basis orthogonalizing $\F$ from basis of $\rad(\p{i})$ which orthogonalizes the remaining radicals $\rad(\p{j})$ (with $j\ne i$). In particular one of the cases where the simultaneous orthogonalization is inherited by orthogonal summands is given.

\begin{theorem}\label{erdamus}
Let $\F=\{\esc{\cdot,\!\cdot}\}_{i\in I}$ be a family of inner products in an arbitrary-dimensional vector space $V$ over a field $\K$ and assume that $\rad(\F)=0$. 
Then the following assertions are equivalent:
\begin{enumerate}
    \item  $\F$ is simultaneously orthogonalizable.
    \item There is an $i\in I$ such that 
$\rad(\p{i}) \ne V$ and  $\rad(\p{i})$ is $\F$-supp\-lemented  for some supplement $S'$ such that: (i) the family of inner products $\F\vert_{S'}$ is simultaneously orthogonalizable and nondegenerate; (ii)  the family of inner products  $\F\vert_{\rads(\p{i})}$ is simultaneously orthogonalizable.

\end{enumerate}
\end{theorem}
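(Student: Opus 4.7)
\medskip

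\noindent\textbf{Plan.} I would prove the two implications separately; one direction is essentially immediate from earlier results, while the other requires a careful choice of the index~$i$.

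\medskip

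\noindent $(2)\Rightarrow(1)$: This is a direct application of Lemma~\ref{elpus}\eqref{moka}. The hypothesis provides a decomposition $V=\rad(\p{i})\oPerp_{\F} S'$ together with the simultaneous orthogonalizability of both restricted families $\F\vert_{\rads(\p{i})}$ and $\F\vert_{S'}$, which are precisely the hypotheses of that item. No further work is needed.

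\medskip

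\noindent $(1)\Rightarrow(2)$: First I would locate a suitable index. Since $\rad(\F)=0$ and $V\neq 0$, not every inner product in $\F$ can vanish identically on $V$ (otherwise $\rad(\F)=V$), so there exists $i\in I$ with $\p{i}\neq 0$, giving $\rad(\p{i})\neq V$. Using~(1), fix a basis $\{e_j\}_{j\in J}$ of $V$ that simultaneously orthogonalizes $\F$. By Proposition~\ref{supl}, there is a subset $J_i\subset J$ such that $\{e_j\}_{j\in J_i}$ is a basis of $\rad(\p{i})$. Define $S':=\span\{e_j\}_{j\in J\setminus J_i}$. The orthogonality of $\{e_j\}_{j\in J}$ relative to every inner product in $\F$ forces $V=\rad(\p{i})\oPerp_{\F} S'$, so $S'$ is an $\F$-supplement of $\rad(\p{i})$.

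\medskip

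\noindent The crucial point is the nondegeneracy of $\F\vert_{S'}$. For each $j\in J\setminus J_i$ we have $e_j\notin\rad(\p{i})$; since $\esc{e_j,e_k}_i=0$ for all $k\neq j$ by orthogonality of the basis, membership in $\rad(\p{i})$ would be equivalent to $\esc{e_j,e_j}_i=0$. Hence $\esc{e_j,e_j}_i\neq 0$ for every $j\in J\setminus J_i$, so the Gram matrix of $\p{i}\vert_{S'}$ in the basis $\{e_j\}_{j\in J\setminus J_i}$ is diagonal with all nonzero diagonal entries, and $\p{i}\vert_{S'}$ is nondegenerate. Simultaneous orthogonalizability of $\F\vert_{S'}$ and of $\F\vert_{\rads(\p{i})}$ is inherited directly from the orthogonal basis by restriction, yielding (i) and (ii) of~(2).

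\medskip

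\noindent\textbf{Main obstacle.} The only genuine subtlety is guaranteeing that the chosen $i$ actually produces a nondegenerate restriction $\p{i}\vert_{S'}$. This uses both the assumption $\rad(\F)=0$ (to ensure some $\p{i}$ is nonzero, hence $\rad(\p{i})\neq V$) and the observation encapsulated in Proposition~\ref{supl} that in a simultaneously orthogonalizing basis the radical of each $\p{i}$ is spanned exactly by the basis vectors that are isotropic for $\p{i}$. Once these two facts are combined, the decomposition $V=\rad(\p{i})\oPerp_{\F} S'$ automatically splits $\F$ into a nondegenerate piece on $S'$ and a totally isotropic piece on $\rad(\p{i})$, and all remaining assertions are immediate.
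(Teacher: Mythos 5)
Your proof is correct and follows essentially the same route as the paper's: $(2)\Rightarrow(1)$ via Lemma \ref{elpus}\eqref{moka}, and $(1)\Rightarrow(2)$ by choosing $i$ with $\rad(\p{i})\ne V$ (possible because $\rad(\F)=0$), splitting a simultaneously orthogonalizing basis according to Proposition \ref{supl}, and taking $S'$ to be the span of the remaining basis vectors. You even supply the detail the paper leaves implicit, namely that $\p{i}\vert_{S'}$ is nondegenerate because its Gram matrix in the restricted basis is diagonal with nonzero entries.
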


\begin{proof}
 Let $\{e_j\}_{j\in J}$ be an orthogonal basis of $V$ relative to the family $\F$. Since $\rad(\F)=0$ there is some $i\in I$ such that $\rad(\p{i})\ne V$. Applying Proposition \ref{supl} we know that $\rad(\p{i})$ is $\F$-supplemented. 
Then $\rad(\p{i})$ is the linear span of a certain subset of the previous basis:
 $$\rad(\p{i})=\sp\{e_j\}_{j\in J_i}$$ for some $J_i\subset J$. Then $S'=\sp\{e_j\}_{j\in J\setminus J_i}$
 is an $\F$-supplement of $\rad(\p{i})$ and the family of inner products $\F\vert_{S'}$ is simultaneously orthogonalizable (relative to the basis 
 $\{e_j\}_{j\in J\setminus J_i}$) and nondegenerate. Observe that $\F\vert_{\rads(\p{i})}$ is simultaneously orthogonalizable too. For the converse apply Lemma \ref{elpus}.
\end{proof}

\begin{remark} \rm
Without loss of generality, in the family $\F\vert_{\rads(\p{i})}$ we can eliminate $\p{i} \vert_{\rads(\p{i})}$  since this inner product is null. At a computational level this simplifies the complexity of the problem.
\end{remark}

\begin{theorem}\label{tsal}
Let $V$ be a finite-dimensional vector space  over  a field $\K$ and let $\F=\{\p{i}\}_{i\in I}$ be a family of inner products. Then  $\F$ is \so\ if and only if $V =\rad(\F)\oPerp \left( \oPerp_{j \in J} V_j \right)$ with $|J| < \infty$ such that $\F\vert_{V_j} $ is nondegenerate and \so. 
\end{theorem}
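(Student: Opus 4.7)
The plan is to prove both implications separately, exploiting Proposition~\ref{tomato} to reduce the forward direction to the radical-free case.

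\textbf{Sufficiency ($\Leftarrow$).} Assume $V=\rad(\F)\oPerp\bigl(\oPerp_{j\in J}V_j\bigr)$ with each $\F\vert_{V_j}$ simultaneously orthogonalizable. Take any basis $B_0$ of $\rad(\F)$ together with an orthogonal basis $B_j$ of $V_j$ relative to $\F\vert_{V_j}$ for each $j\in J$. Set $B=B_0\sqcup\bigsqcup_{j\in J} B_j$; since the sum is a direct sum, $B$ is a basis of $V$. For distinct $v,w\in B$, orthogonality with respect to every $\p{i}$ is immediate by cases: if at least one of $v,w$ lies in $B_0$, then $v$ or $w$ lies in $\rad(\F)\subset\rad(\p{i})$ for every $i$; if $v\in B_j$ and $w\in B_k$ with $j\ne k$, the orthogonality follows from $V_j\oPerp_\F V_k$; and if $v,w$ are both in $B_j$ with $v\ne w$, orthogonality is given by the choice of $B_j$.

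\textbf{Necessity ($\Rightarrow$).} Suppose $\F$ is simultaneously orthogonalizable. By Proposition~\ref{tomato} there is a subspace $W\subset V$ with $V=\rad(\F)\oPerp_\F W$, $\rad(\F\vert_W)=0$, and $\F\vert_W$ also simultaneously orthogonalizable. Pick an orthogonal basis $\{e_1,\ldots,e_n\}$ of $W$ relative to $\F\vert_W$ (which is finite since $\dim V<\infty$). The decisive observation is that for every $j$ there exists $i\in I$ with $\esc{e_j,e_j}_i\ne0$: indeed, if $\esc{e_j,e_j}_i=0$ for all $i$, then because $\esc{e_j,e_k}_i=0$ for $k\ne j$ by orthogonality of the basis, one has $\esc{e_j,W}_i=0$ for every $i$, so $e_j\in\rad(\F\vert_W)=0$, contradicting $e_j\ne0$.

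Now set $V_j:=\K e_j$ for $j=1,\dots,n$. Each $V_j$ is one-dimensional, hence $\F\vert_{V_j}$ is trivially simultaneously orthogonalizable (the basis $\{e_j\}$ works), and the previous observation supplies some $\p{i}\vert_{V_j}$ with nonzero Gram matrix, so $\F\vert_{V_j}$ is nondegenerate. Since the $e_j$'s are pairwise orthogonal relative to every inner product in $\F$, we obtain $W=\oPerp_{j=1}^n V_j$, and therefore $V=\rad(\F)\oPerp\bigl(\oPerp_{j=1}^nV_j\bigr)$, which is the required decomposition with $|J|=n<\infty$.

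The only place where care is needed is the step identifying nondegeneracy of $\F\vert_{V_j}$ with nonvanishing of some $\esc{e_j,e_j}_i$; this is where the orthogonality of the chosen basis and the hypothesis $\rad(\F\vert_W)=0$ are used together. Beyond that the argument is formal and no serious obstacle should arise. A possible cosmetic refinement is to group together all indices $j$ for which the same $\p{i}$ witnesses nondegeneracy so as to obtain larger summands $V_j$, but the one-dimensional decomposition already suffices to establish the theorem as stated.
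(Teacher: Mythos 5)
Your proof is correct, but your necessity argument takes a genuinely different (and more elementary) route than the paper's. After the same reduction $V=\rad(\F)\oPerp_{\F} W$ via Proposition \ref{tomato}, the paper obtains the summands $V_j$ by recursively applying Theorem \ref{erdamus}: at each stage it peels off an $\F$-supplement $S'$ of some $\rad(\p{i})$ on which the restricted family is nondegenerate and \so, and iterates on what remains, using finite-dimensionality to terminate. You instead read the decomposition directly off an orthogonal basis $\{e_1,\ldots,e_n\}$ of $W$: since $\rad(\F\vert_W)=0$, each $e_j$ satisfies $\esc{e_j,e_j}_i\ne 0$ for some $i$, so each line $\K e_j$ is a one-dimensional summand on which $\F$ restricts to a nondegenerate, trivially \so\ family. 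This bypasses Theorem \ref{erdamus} entirely and shows the statement is essentially a reformulation of simultaneous orthogonalizability; your identification of nondegeneracy of $\F\vert_{\K e_j}$ with the nonvanishing of some $\esc{e_j,e_j}_i$ is exactly right given the paper's definition of a nondegenerate family. What the paper's recursion buys is coarser, potentially higher-dimensional summands each governed by the nondegenerate theory of Theorem \ref{nomam}, which is what the worked Example \ref{otxes} exploits; for the theorem as stated, your finer decomposition is perfectly adequate. Your sufficiency direction (which the paper leaves implicit as the trivial implication) is a complete and correct case analysis.
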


\begin{proof}
The non trivial implication is as follows. First, we apply Proposition \ref{tomato} which gives a decomposition $V =\rad(\F)\oplus W$ where $\F\vert_{W}$ is \so\ and has zero radical. Next, we apply Theorem \ref{erdamus} to the subspace $W$ and the family of inner products $\F\vert_{W}$ and we repeat this process. Since the dimension of $V$ is finite this process finishes in a finite number of steps. 
\end{proof}

\begin{example}\label{otxes}{\rm
As a final example we consider $\K=\Q$ and $V={\Q}^6$, being  the family of inner products $\F=\{\p{i}\}_{i=1}^6$ whose Gram matrices relative to the canonical basis are $\{B_i\}_{i=1}^6$ (respectively) given as follows:
\vskip0.5cm

\begin{center}
\scalebox{0.8}{
\begin{tabular}{l l}
$B_1=
\left(
\begin{array}{rrrrrr}
 1 & 0 & -1 & 1 & 0 & 1 \\
 0 & 2 & -3 & -1 & -1 & -1 \\
 -1 & -3 & 2 & -2 & 2 & 1 \\
 1 & -1 & -2 & 0 & 0 & 1 \\
 0 & -1 & 2 & 0 & 3 & 3 \\
 1 & -1 & 1 & 1 & 3 & 4 \\
 \end{array} \right)$, &
$B_2=\left(
\begin{array}{rrrrrr}
 2 & 1 & 1 & 0 & 2 & 2 \\
 1 & 3 & -1 & 0 & 1 & 1 \\
 1 & -1 & 2 & 0 & 2 & 2 \\
 0 & 0 & 0 & 0 & 0 & 0 \\
 2 & 1 & 2 & 0 & 4 & 4 \\
 2 & 1 & 2 & 0 & 4 & 4 \\
 \end{array}\right)$,
\\
\\
 $B_3=\left(
\begin{array}{rrrrrr}
 3 & 1 & 0 & 1 & 2 & 3 \\
 1 & 1 & -1 & -1 & 2 & 2 \\
 0 & -1 & 1 & -2 & 1 & 0 \\
 1 & -1 & -2 & 0 & 0 & 1 \\
 2 & 2 & 1 & 0 & 3 & 3 \\
 3 & 2 & 0 & 1 & 3 & 4 \\
\end{array}\right)$, &
$B_4=\left(
\begin{array}{rrrrrr}
 1 & 1 & 2 & -1 & 2 & 1 \\
 1 & 1 & -2 & -1 & 0 & 0 \\
 2 & -2 & 0 & 0 & 2 & 3 \\
 -1 & -1 & 0 & -2 & 0 & -1 \\
 2 & 0 & 2 & 0 & 3 & 3 \\
 1 & 0 & 3 & -1 & 3 & 2 \\
\end{array}\right)$,\\ \\

$B_5=\left(
\begin{array}{rrrrrr}
 0 & -1 & 1 & 0 & 0 & 0 \\
 -1 & 2 & -2 & 0 & -2 & -2 \\
 1 & -2 & 3 & 0 & 3 & 3 \\
 0 & 0 & 0 & 0 & 0 & 0 \\
 0 & -2 & 3 & 0 & 3 & 3 \\
 0 & -2 & 3 & 0 & 3 & 3 \\
\end{array}\right)$, &

$B_6=\left(
\begin{array}{rrrrrr}
 2 & 0 & 0 & 1 & 1 & 2 \\
 0 & -1 & 0 & -1 & 2 & 2 \\
 0 & 0 & 0 & -2 & 0 & -1 \\
 1 & -1 & -2 & 0 & 0 & 1 \\
 1 & 2 & 0 & 0 & 1 & 1 \\
 2 & 2 & -1 & 1 & 1 & 2 \\
\end{array}\right)$.
\end{tabular}}
\end{center}

\vskip0.5cm

\noindent
We can see that $\rad(\F)=0$ and $\rad(\p{1})=\K e_1\oplus\K e_2$ where the vectors are
$e_1=(-5,-1,-2,3,1,0)$ and $e_2=(-6,-1,-2,3,0,1)$. It can be checked that $\F\vert_{\rads(\p{1})}$
is nondegenerate, so applying Theorem \ref{nomam} it has a \so\ basis.
Thus we find a basis of $\rad(\p{1})$ orthogonal relative to $\F\vert_{\rads(\p{1})}$: concretely $f_1= -6 e_1 + 5 e_2$,
$f_2= 7 e_1 - 6 e_2$. Next we check that $\F\vert_{\rads(\p{1})}$ is supplemented and we give such a supplement $S'=\oplus_{i=3}^6 \K f_i$ where $f_3=(-1,1,1,0,0,0)$, $f_4=(0,0,0,1,0,0)$,  $f_5=(-1,0,0,0,1,0)$ and $f_6=(-1,0,0,0,0,1)$. Now, again by
Theorem \ref{nomam}, $\F\vert_{S'}$ is nondegenerate and we find a \so\ basis of $S'$  given  by $\{q_3,q_4,q_5,q_6\}$ where
$q_3=(0,0,0,0,1,-1)$, $q_4=(0,0,0,-1,-1,1)$, $q_5=(-1,1,1,-2,-4,4)$ and $q_6=(0,1,1,-2,-4,3)$.
So, definitively there is a basis of $V$ given by $\{f_1,f_2,q_3,q_4,q_5,q_6\}$ and
the inner products in this basis are given by the matrices:
$\diag(0,0,1,-1,1,2)$, $\diag(1,1,0,0,1,1)$, $\diag(1,1,1,-1,1,0)$, $\diag(1,1,-1,-1,0,1)$, $\diag(1,-1,0,0,1,2)$ and $\diag(1,0,1,-1,1,-1)$. 

Let us mention that if we have an algebra structure on $\Q^6$ with structure constants
$c_{ijk}$ being the $(i,j)$ entry of $B_k$, then this algebra is an evolution algebra
and a natural basis is precisely $\{f_1,f_2,q_3,q_4,q_5,q_6\}$ begin its structure matrix

$$\tiny\left(
\begin{array}{rrrrrr}
0  & 1 & 1   &  1  & 1 & 1\\
0  & 1 & 1   &  1  & -1 & 0\\ 
1  & 0 & 1   &  -1 & 0 & 1 \\
-1 & 0 & -1  &  -1 & 0 & -1 \\
1  & 1 &  1  &  0   & 1 & 1 \\
2  & 1 &  0  &  1  &  2 & -1\\
\end{array}\right).
$$

}
\end{example}

\end{document}